\newtheorem{theorem}{Theorem}[section]
\newtheorem{proposition}[theorem]{Proposition}
\newtheorem{corollary}[theorem]{Corollary}
\theoremstyle{definition}
\newtheorem*{example}{Example}
\newtheorem*{remark}{Remark}
\newtheorem*{remarks}{Remarks}
\numberwithin{equation}{section}
\newcommand{\FF}{\mathbb{F}}  \def\QQ{\mathbb{Q}}  \newcommand{\CC}{\mathbb{C}} 
\def\tr{\mathrm{tr}} \def\dim{\mathrm{dim}} \def\Hom{\mathrm{Hom}} 
 \def\ZZ{\mathbb{Z}} 
    \def\GL{\mathrm{GL}}   \def\Res{\mathrm{Res}} \def\dd{\displaystyle} \def\Inf{\mathrm{Inf}} \def\ch{\mathrm{ch}} \def\spanning{\textnormal{-span}}
\def\vphi{\varphi}  
 \def\veps{\varepsilon} 
  \def\scscs{\scriptscriptstyle} \def\scs{\scriptstyle}
\newcommand{\cX}{\mathcal{X}}
\newcommand{\cS}{\mathcal{S}}
\newcommand{\cK}{\mathcal{K}}
\newcommand{\cM}{\mathcal{M}}
\newcommand{\cL}{\mathcal{L}}
\newcommand{\cA}{\mathcal{A}}
\newcommand{\One}{{1\hspace{-.14cm} 1}}
\newcommand{\Id}{\mathrm{Id}}
\newcommand{\NCSym}{\mathrm{\mathbf{\Pi}}}
\newcommand{\WSym}{\mathrm{\mathbf{\Pi}}}
\newcommand{\UT}{\mathrm{UT}}
\newcommand{\SC}{\mathrm{\mathbf{SC}}}
\newcommand{\SInd}{\mathrm{SInd}}
\newcommand{\Def}{\mathrm{Def}}
\newcommand{\fkn}{\mathfrak{n}}
\newcommand{\st}{\mathrm{st}}
\newcommand{\KK}{\mathbb{K}}
\newcommand{\SG}{\mathfrak{S}}
\newcommand{\bm}{\mathbf{m}}
\newcommand{\bp}{\mathbf{p}}
\newcommand{\bk}{\mathbf{k}}
\newcommand{\bg}{\mathbf{g}}
\newcommand{\Sym}{\mathrm{Sym}}
\def\Mper{M}
\def\upi{U}
\def\csupp{{\rm csupp}}
\newcommand{\larc}[1]{\hspace{-.4ex}\overset{#1}{\frown}\hspace{-.4ex}}
\newcommand{\slarc}[1]{\overset{#1}{\frown}}
\renewcommand{\@makefnmark}{\mbox{\textsuperscript{}}}
\def\adots{\mathinner{\mkern2mu\raise0pt\hbox{.}  
\mkern2mu\raise4pt\hbox{.}\mkern1mu
\raise7pt\vbox{\kern7pt\hbox{.}}\mkern1mu}}
\begin{document}

\title{Supercharacters, symmetric functions in noncommuting\\ variables, and related Hopf algebras}

\author{
 Marcelo Aguiar, Carlos Andr\'e, Carolina Benedetti, Nantel Bergeron,\\ Zhi Chen, Persi Diaconis, Anders Hendrickson, Samuel Hsiao, I. Martin Isaacs,\\ Andrea Jedwab, Kenneth Johnson, Gizem Karaali, Aaron Lauve, Tung Le,\\ Stephen Lewis, Huilan Li, Kay Magaard, Eric Marberg, Jean-Christophe Novelli,\\ Amy Pang, Franco Saliola, Lenny Tevlin, Jean-Yves Thibon, Nathaniel Thiem,\\ Vidya Venkateswaran, C. Ryan Vinroot, Ning Yan, Mike Zabrocki}

\date{}

\maketitle

\begin{abstract}
We identify two seemingly disparate structures: supercharacters, a useful way of doing Fourier analysis on the group of unipotent uppertriangular matrices with coefficients in a finite field, and the ring of symmetric functions in noncommuting variables.  Each is a Hopf algebra and the two are isomorphic as such.  This allows developments in each to be transferred.  The identification suggests a rich class of examples for the emerging field of combinatorial Hopf algebras.
\end{abstract}

\section{Introduction}

Identifying structures in seemingly disparate fields is a basic task of mathematics.  An example, with parallels to the present work, is the identification of the character theory of the symmetric group with symmetric function theory.  This connection is wonderfully exposited in Macdonald's book \cite{Mcd}.  Later, Geissinger and Zelevinsky independently realized that there was an underlying structure of Hopf algebras that forced and illuminated the identification \cite{Ge77,Ze}.  We present a similar program for a ``supercharacter'' theory associated to the uppertriangular group and the symmetric functions in noncommuting variables.  

\subsection{Uppertriangular matrices}

Let $\UT_n(q)$ be the group of uppertriangular matrices with entries in the finite field $\FF_q$ and ones on the diagonal.  This group is a Sylow $p$-subgroup of $\GL_n(q)$.  Describing the conjugacy classes or characters of $\UT_n(q)$ is a provably ``wild'' problem.  In a series of papers, Andr\'e developed a cruder theory that lumps together various conjugacy classes into ``superclasses'' and considers certain sums of irreducible characters as ``supercharacters."  The two structures are compatible (so supercharacters are constant on superclasses).  The resulting theory is very nicely behaved --- there is a rich combinatorics describing induction and restriction along with an elegant formula for the values of supercharacters on superclasses.  The combinatorics is described in terms of set partitions (the symmetric group theory involves integer partitions) and the combinatorics seems akin to tableau combinatorics.  At the same time, supercharacter theory is rich enough to serve as a substitute for ordinary character theory in some problems \cite{ADS} .

In more detail, the group $\UT_n(q)$ acts on both sides of the algebra of strictly upper-triangular matrices $\fkn_n$ (which can be thought of as $\fkn_n=\UT_n(q)-1$).  The two sided orbits on $\fkn_n$ can be mapped back to $\UT_n(q)$ by adding the identity matrix.  These orbits form the superclasses in $\UT_n(q)$.  A similar construction on the dual space $\fkn^*_n$ gives a collection of class functions on $\UT_n(q)$ that turn out to be constant on superclasses.  These orbit sums (suitably normalized) are the supercharacters.  Let 
$$\SC=\bigoplus_{n\geq 0} \SC_n,$$
where $\SC_n$ is the set of functions from $\UT_n(q)$ to $\CC$ that are constant on superclasses, and $\SC_0=\CC\spanning\{1\}$ is by convention the set of class functions of $\UT_0(q)=\{\}$.

It is useful to have a combinatorial description of the superclasses in $\SC_n$.   These are indexed by elements of $\fkn_n$ with at most one nonzero entry in each row and column.   Every superclass contains a unique such matrix, obtained by a set of elementary row and column operations.   Thus, when $n=3$, there are five such patterns; with $\ast\in \FF_q^\times$,
$$\left(\begin{array}{ccc}
0 & 0 & 0\\
0 & 0 & 0\\
0 & 0 & 0
\end{array}\right),\quad
\left(\begin{array}{ccc}
0 & \ast & 0\\
0 & 0 & 0\\
0 & 0 & 0
\end{array}\right),\quad
\left(\begin{array}{ccc}
0 & 0 & 0\\
0 & 0 & \ast\\
0 & 0 & 0
\end{array}\right),\quad
\left(\begin{array}{ccc}
0 & 0 & \ast\\
0 & 0 & 0\\
0 & 0 & 0
\end{array}\right),\quad \text{and}\quad
\left(\begin{array}{ccc}
0 & \ast & 0\\
0 & 0 & \ast\\
0 & 0 & 0
\end{array}\right).$$
Each representative matrix $X$ can be encoded as a pair $(D,\phi)$, where $D=\{(i,j)\mid X_{ij}\neq 0\}$ and $\phi:D\rightarrow \FF_q^\times$ is given by $\phi(i,j)=X_{ij}$.   There is a slight abuse of notation here since the pair $(D,\phi)$ does not record the size of the matrix $X$.  Let $X_{D,\phi}$ denote the distinguished representative corresponding to the pair $(D,\phi)$, and let  $\kappa_{D,\phi}=\kappa_{X_{D,\phi}}$ be the function that is 1 on the superclass and zero elsewhere.

We give combinatorial expressions for the product and coproduct
in this section and representation theoretic descriptions in Section \ref{PiHopfAlgebra}.  The product is given by
\begin{equation}\label{IntroProduct}
\kappa_{X_{D,\phi}}\cdot \kappa_{X_{D',\phi'}}=\sum_{X'}  \kappa_{\left(\begin{smallmatrix} X_{D,\phi} & X'\\ 0 & X_{D',\phi'}\end{smallmatrix}\right)},
\end{equation}
where the sum runs over all ways of placing a matrix $X'$  into the upper-right hand block such that the resulting matrix still has at most one nonzero entry in each row and column.   Note that this differs from the pointwise product of class functions,
which is internal to each $\SC_n$ (and hence does not turn $\SC$ into a
graded algebra).

For example, if
\begin{align*}
(D,\phi)&=(\{\},\phi)\leftrightarrow \left(\begin{array}{cc} 0 & 0 \\ 0 & 0\end{array}\right)\\
(D',\phi')&=(\{(1,2),(2,3)\},\{\phi(1,2)=a,\phi(2,3)=b\})\leftrightarrow \left(\begin{array}{ccc}0 & a & 0\\ 0 & 0 & b\\ 0 & 0& 0\end{array}\right),
\end{align*}
 where the sizes of the matrices are $2$ and $3$, respectively, then
$$\kappa_{D,\phi}\cdot \kappa_{D',\phi'} =\kappa_{\left(\begin{smallmatrix} 
0 & 0 & 0 & 0 & 0\\
0 & 0 & 0 & 0 &  0\\
0 &  0 & 0 & a & 0\\
0 & 0 &  0 & 0 & b\\
0 & 0 & 0 &  0 & 0\end{smallmatrix}\right)}+\sum_{c\in\FF_q^\times} \kappa_{\left(\begin{smallmatrix} 
0 & 0 & c & 0 & 0\\
0 & 0 & 0 & 0 &  0\\
0 &  0 & 0 & a & 0\\
0 & 0 &  0 & 0 & b\\
0 & 0 & 0 &  0 & 0\end{smallmatrix}\right)}+
\kappa_{\left(\begin{smallmatrix} 
0 & 0 & 0 & 0 & 0\\
0 & 0 & c & 0 &  0\\
0 &  0 & 0 & a & 0\\
0 & 0 &  0 & 0 & b\\
0 & 0 & 0 &  0 & 0\end{smallmatrix}\right)}.$$

We can define the coproduct on $\SC_n$ by
\begin{equation}\label{IntroCoProduct}
\Delta(\kappa_{X_{D,\phi}})=\sum_{{[n]=S\cup S^c\atop (i,j)\in D\text{ only if}}\atop \text{$i,j\in S$ or $i,j\in S^c$}}
 \kappa_{(X_{D,\phi})_S}\otimes \kappa_{(X_{D,\phi})_{S^c}},
 \end{equation}
where $(X)_S$ is the matrix restricted to the rows and columns in $S$. 
For example, if $D=\{(1,4),(2,3)\}$, $\phi(2,3)=a$, and $\phi(1,4)=b$, then
$$\Delta(\kappa_{D,\phi})=\kappa_{D,\phi}\otimes 1+ \kappa_{\left(\begin{smallmatrix} 
0 & a\\
0 & 0\end{smallmatrix}\right)}\otimes  \kappa_{\left(\begin{smallmatrix} 
0 & b\\
0 & 0\end{smallmatrix}\right)}+ \kappa_{\left(\begin{smallmatrix} 
0 & b\\
0 & 0\end{smallmatrix}\right)}\otimes  \kappa_{\left(\begin{smallmatrix} 
0 & a\\
0 & 0\end{smallmatrix}\right)}+ 1 \otimes \kappa_{D,\phi}.$$

In Section \ref{PiHopfAlgebra}, we show that the product and coproduct above have a representation theoretic meaning and we prove that

\medskip

\noindent\textbf{Corollary \ref{PiIsHopfAlgebra}} \emph{With the product (\ref{IntroProduct}) and the coproduct (\ref{IntroCoProduct}), the space $\SC$ forms a Hopf algebra.}

\medskip

Background on Hopf algebras is in Section \ref{BackgroundHopfAlgebras}.   We note here that $\SC$ is graded, noncommutative, and cocommutative.  It has a unit $\kappa_\emptyset\in \SC_0$ and a counit $\varepsilon:\SC\rightarrow \CC$ obtained by taking the coefficient of $\kappa_\emptyset$.

\subsection{Symmetric functions in noncommuting variables}

Let $\lambda$ be a set partition of $[n]=\{1,2,\ldots, n\}$, denoted $\lambda\vdash [n]$.  A monomial of shape $\lambda$ is a product of noncommuting variables $a_1 a_2 \cdots a_k$, where variables are equal if and only if the
corresponding indices/positions are in the same block/part of $\lambda$.  For example, if $135|24\vdash [5]$, then $xyxyx$ is a monomial of shape $\lambda$ ($135|24$ is the set partition of $[5]$ with parts $\{1,3,5\}$ and $\{2,4\}$).  Let $\bm_\lambda$ be the sum of all monomials of shape $\lambda$.  Thus, with three variables 
$$\bm_{135|24}=xyxyx+yxyxy+xzxzx+zxzxz+yzyzy+zyzyz.$$
Usually, we work with an infinite set of variables and formal sums.

Define 
$$\NCSym=\bigoplus_{n\geq 0} \NCSym_n,\qquad \text{where}\qquad \NCSym_n=\CC\spanning\{\bm_\lambda\mid \lambda\vdash[n]\}.$$
The elements of $\NCSym$ are called symmetric functions in noncommuting
variables. As linear combinations of the $\bm_\lambda$'s, they are invariant under permutations of variables.
Such functions were considered by Wolf  \cite{Wo} and Doubilet \cite{Do}.
More recent work of Sagan brought them to the forefront. A lucid
introduction is given by Rosas and Sagan \cite{SR} and combinatorial
applications by Gebhard and Sagan \cite{GS}.  The algebra $\NCSym$ is actively studied as part of the theory of combinatorial Hopf algebras \cite{AM,BHRZ,BRRZ,BZ,HNT,NT06}.  

\begin{remark}
There are a variety of notations given for $\NCSym$, including $\mathbf{NCSym}$ and $\mathbf{WSym}$.  Instead of choosing between these two conventions, we will use the more generic $\NCSym$, following Rosas and Sagan.
\end{remark}

Here is a brief definition of product and coproduct; Section \ref{BackgroundNCSym} has more details.  If $\lambda\vdash[k]$ and $\mu\vdash [n-k]$, then
\begin{equation}\label{mult_monomial}
\bm_\lambda \bm_\mu=\sum_{\nu\vdash [n]\atop \nu\wedge ([k]|[n-k])=\lambda\mid\mu} \bm_\nu.
\end{equation}
where $\wedge$ denotes the join in the poset of set partitions under refinement (in this poset $1234$ precedes the two incomparable set partitions $1|234$ and $123|4$), and $\lambda\mid\mu\vdash[n]$ is the set partition
\begin{equation}\label{SetPartitionConcatenation}
\lambda\mid \mu = \lambda_1|\lambda_2|\cdots|\lambda_a|\mu_1+k|\mu_2+k|\cdots|\mu_b+k.
\end{equation}
Thus, if $\lambda=1|2$ and $\mu=123$, then $\lambda \mid \mu=1|2|345$.

The coproduct is defined by
\begin{equation}\label{mcomult}
\Delta(\bm_\lambda)=\sum_{J\subseteq [\ell(\lambda)]} \bm_{\mathrm{st}(\lambda_J)}\otimes \bm_{\mathrm{st}(\lambda_{J^c})},
\end{equation}
where $\lambda$ has $\ell(\lambda)$ parts, $\lambda_J=\{\lambda_j\in \lambda\mid j\in J\}$,  $\st:J\rightarrow [|J|]$ is the unique order preserving bijection, and $J^c=[\ell(\lambda)]\setminus J$.

Thus,
\begin{align*}
\Delta(\bm_{14|2|3}) = &\bm_{14|2|3}\otimes 1 +2\bm_{13|2}\otimes \bm_1+\bm_{12}\otimes \bm_{1|2}+\bm_{1|2}\otimes \bm_{12}\\
&+2\bm_1\otimes \bm_{13|2}+1\otimes \bm_{14|2|3}.
\end{align*}

It is known (\cite[Section 6.2]{AM},\cite[Theorem 4.1]{BRRZ}) that $\NCSym$, endowed with this product and coproduct  is a Hopf algebra, where  the antipode is inherited from the grading.  A basic result of the present paper is stated here for $q=2$ (as described in Section \ref{BackgroundSupercharacters} below, the pairs $(D,\phi)$ are in correspondence with set-partitions).  The version for general $q$ is stated in Section~\ref{ColoredCorrespondence}.

\medskip

\noindent\textbf{Theorem \ref{q=2Correspondence}.}
\emph{For $q=2$, the function
$$\begin{array}{rccc} \ch: & \SC & \longrightarrow & \NCSym\\ 
& \kappa_\mu & \mapsto & \bm_\mu\end{array}$$
is a Hopf algebra isomorphism.}

\medskip

This construction of a Hopf algebra from the representation theory of
a sequence of groups
is the main contribution of this paper. It differs from previous work
in that supercharacters are used.
Previous work was confined to ordinary characters (e.g. \cite{Li}) and the
results of \cite{BLL} indicate that
this is a restrictive setting. This work opens the possibility for a
vast new source of Hopf algebras.

Section 2 gives further background on supercharacters (\ref{BackgroundSupercharacters}), some representation theoretic operations (\ref{BackgroundRepresentationTheory}), Hopf algebras (\ref{BackgroundHopfAlgebras}), and symmetric functions in noncommuting variables (\ref{BackgroundNCSym}).  Section 3 proves the isomorphism theorem for general $q$, and Section 4 proves an analogous realization for the dual Hopf algebra.   The appendix describes the available Sage programs developed in parallel with the present study, and a link for a list of open problems.

\subsubsection*{Acknowledgements}

This paper developed during a focused research week at the American Institute of Mathematics in May 2010.  The main results presented here were proved as a group during that meeting.

\section{Background}

\subsection{Supercharacter theory}\label{BackgroundSupercharacters}

Supercharacters were first studied by Andr\'e (e.g. \cite{An95}) and Yan \cite{Ya01} in relation to $\UT_n(q)$ in order to find a more tractable way to understand the representation theory of $\UT_n(q)$.  Diaconis and Isaacs \cite{DI08} then generalized the concept to arbitrary finite groups, and we reproduce a version of this more general definition below.  

A \emph{supercharacter theory} of a finite group $G$ is a pair $(\cK,\cX)$ where $\cK$ is a partition of $G$ and $\cX$ is a partition of the irreducible characters of $G$ such that 
\begin{enumerate}
\item[(a)] Each $K\in\cK$ is a union of conjugacy classes,
\item[(b)] $\{1\}\in\cK$, where $1$ is the identity element of $G$, and $\{\One\}\in\cX$, where $\One$ is the trivial character of $G$. 
\item[(c)] For $X\in \cX$, the character
$$\sum_{\psi\in X}\psi(1)\psi$$
is constant on the parts of $\cK$,
\item[(d)] $|\cK|=|\cX|$.
\end{enumerate}
We will refer to the parts of $\cK$ as \emph{superclasses}, and for some fixed choice of scalars $c_X\in \QQ$ (which are not uniquely determined), we will refer to the characters 
$$\chi^X=c_X\sum_{\psi\in X}\psi(1)\psi,\qquad \text{for  $X\in\cX$}$$
 as \emph{supercharacters} (the scalars $c_X$ should be picked such that the supercharacters are indeed characters).  For more information on the implications of these axioms, including some redundancies in the definition, see \cite{DI08}.

There are a number of different known ways to construct supercharacter theories for groups, including 
\begin{itemize}
\item Gluing together group elements and irreducible characters using outer automorphisms \cite{DI08},
\item Finding normal subgroups $N\triangleleft G$ and grafting together superchararacter theories for the normal subgroup $N$ and for the factor group $G/N$ to get a supercharacter theory for the whole group \cite{He08}.
\end{itemize}
This paper will however focus on a technique first introduced for algebra groups \cite{DI08}, and then generalized to some other types of groups by Andr\'e and Neto (e.g. \cite{AN06}). 

The group $\UT_n(q)$ has a natural two-sided action on the $\FF_q$-spaces
$$\fkn=\UT_n(q)-1\qquad\text{and}\qquad \fkn^*=\Hom(\fkn,\FF_q)$$
given by left and right multiplication on $\fkn$ and for $\lambda\in \fkn^*$,
$$(u\lambda v)(x-1)=\lambda(u^{-1}(x-1)v^{-1}), \qquad\text{for $u,v,x\in \UT_n(q)$}.$$
It can be shown that the orbits of these actions parametrize the superclasses and supercharacters, respectively, for a supercharacter theory.  In particular, two elements $u,v\in\UT_n(q)$ are in the same superclass if and only if $u-1$ and $v-1$ are in the same two-sided orbit in $\UT_n(q)\backslash \fkn/\UT_n(q)$.  Since the action of $\UT_n(q)$ on $\fkn$ can be viewed as applying row and column operations, we obtain a parameterization of superclasses given by
$$\left\{\begin{array}{c} \text{Superclasses}\\ \text{of $\UT_n(q)$}\end{array}\right\}\longleftrightarrow \left\{\begin{array}{c}\text{$u-1\in \fkn$ with at most}\\ \text{one nonzero entry in}\\ \text{each row and column}\end{array}\right\}.$$ 
This indexing set is central to the combinatorics of this paper, so we give several interpretations for it.  Let
\begin{align*}
\cM_n(q) & = \left\{(D,\phi)\ \bigg|\   \begin{array}{@{}l} D\subseteq \{(i,j)\mid 1\leq i<j\leq n\}, \phi:D\rightarrow \FF_q^\times,\\  (i,j),(k,l)\in D\text{ implies } i\neq k,j\neq l\end{array}\right\}\\
\cS_n(q) & =  \left\{\begin{array}{c}\text{Sets $\lambda$ of triples $i\larc{a}j=(i,j,a)\in [n]\times [n]\times \FF_q^\times$,}\\ \text{with $i<j$, and $i\larc{a}j,k\larc{b}l\in \lambda$ implies $i\neq k,j\neq l$}\end{array}\right\},
\end{align*}
where we will refer to the elements of $\cS_n(q)$ as \emph{$\FF_q^\times$-set partitions}.  In particular,
\begin{equation}\label{CombinatorialIndexSets}
 \begin{array}{ccccc}
\cM_n(q) & \longleftrightarrow & \cS_n(q) & \longleftrightarrow & \left\{\begin{array}{c}\text{$u-1\in \fkn$ with at most}\\ \text{one nonzero entry in}\\ \text{each row and column}\end{array}\right\}\\
(D,\phi) & \mapsto & \lambda=\{i\larc{\phi(i,j)}j\mid (i,j)\in D\} & \mapsto &  \displaystyle \sum_{i\slarc{a}j\in \lambda} ae_{ij},
\end{array}
\end{equation}
where $e_{ij}$ is the matrix with 1 in the $(i,j)$ position and zeroes elsewhere.  The following table lists the correspondences for $n=3$.
$$\begin{array}{|c|c|c|c|c|c|} \hline 
\text{Superclass} &
 \left(\begin{array}{@{}c@{\ }c@{\ }c@{}}
0 & 0 & 0\vspace{-.1cm}\\
0 & 0 & 0\vspace{-.1cm}\\
0 & 0 & 0
\end{array}\right)&
\left(\begin{array}{@{}c@{\ }c@{\ }c@{}}
0 & a & 0\vspace{-.1cm}\\
0 & 0 & 0\vspace{-.1cm}\\
0 & 0 & 0
\end{array}\right)&
\left(\begin{array}{@{}c@{\ }c@{\ }c@{}}
0 & 0 & 0\vspace{-.1cm}\\
0 & 0 & a\vspace{-.1cm}\\
0 & 0 & 0
\end{array}\right)&
\left(\begin{array}{@{}c@{\ }c@{\ }c@{}}
0 & 0 & a\vspace{-.1cm}\\
0 & 0 & 0\vspace{-.1cm}\\
0 & 0 & 0
\end{array}\right)&
\left(\begin{array}{@{}c@{\ }c@{\ }c@{}}
0 & a & 0\vspace{-.1cm}\\
0 & 0 & b\vspace{-.1cm}\\
0 & 0 & 0
\end{array}\right)\\ \hline
\cM_3(q) & D=\{\} & \begin{array}{@{}c@{}} D=\{(1,2)\},\\ \phi(1,2)=a\end{array} &  \begin{array}{@{}c@{}} D=\{(2,3)\},\\ \phi(2,3)=a\end{array}  &  \begin{array}{@{}c@{}} D=\{(1,3)\},\\ \phi(1,3)=a\end{array}  &  \begin{array}{@{}c@{}} D=\{(1,2),(2,3)\},\\ \phi(1,2)=a, \phi(2,3)=b\end{array} \\ \hline
\cS_3(q) &
\begin{tikzpicture}[baseline=.2cm]
	\foreach \x in {1,2,3} 
		\node (\x) at (\x/2,0) [inner sep=0pt] {$\bullet$};
	\foreach \x in {1,2,3} 
		\node at (\x/2,-.2) {$\scs\x$}; 
\end{tikzpicture} 
& 
\begin{tikzpicture}[baseline=.2cm]
	\foreach \x in {1,2,3} 
		\node (\x) at (\x/2,0) [inner sep=0pt] {$\bullet$};
	\foreach \x in {1,2,3} 
		\node at (\x/2,-.2) {$\scs\x$};
	\draw (1) .. controls (1.25/2,.5) and (1.75/2,.5) ..  node [above=-2pt] {$\scs a$} (2); 
\end{tikzpicture} 
&
\begin{tikzpicture}[baseline=.2cm]
	\foreach \x in {1,2,3} 
		\node (\x) at (\x/2,0) [inner sep=0pt] {$\bullet$};
	\foreach \x in {1,2,3} 
		\node at (\x/2,-.2) {$\scs\x$};
	\draw (2) .. controls (2.25/2,.5) and (2.75/2,.5) ..  node [above=-2pt] {$\scs a$} (3); 
\end{tikzpicture} 
&
\begin{tikzpicture}[baseline=.2cm]
	\foreach \x in {1,2,3} 
		\node (\x) at (\x/2,0) [inner sep=0pt] {$\bullet$};
	\foreach \x in {1,2,3} 
		\node at (\x/2,-.2) {$\scs\x$};
	\draw (1) .. controls (1.5/2,.75) and (2.5/2,.75) ..  node [above=-2pt] {$\scs a$} (3); 
\end{tikzpicture} 
&
\begin{tikzpicture}[baseline=.2cm]
	\foreach \x in {1,2,3} 
		\node (\x) at (\x/2,0) [inner sep=0pt] {$\bullet$};
	\foreach \x in {1,2,3} 
		\node at (\x/2,-.2) {$\scs\x$};
	\draw (1) .. controls (1.25/2,.5) and (1.75/2,.5) ..  node [above=-2pt] {$\scs a$} (2); 
	\draw (2) .. controls (2.25/2,.5) and (2.75/2,.5) ..  node [above=-2pt] {$\scs b$} (3); 
\end{tikzpicture} \\ \hline \end{array}$$

\begin{remark}
Consider the maps
\begin{equation}\label{UnderlyingSetPartition}
\begin{array}{rccc} \pi:&\cM_n(q)& \longrightarrow &\cM_n(2)\\
&(D,\phi) & \mapsto & (D,1)\end{array} \qquad \text{and}\qquad  \begin{array}{rccc} \pi:&\cS_n(q) & \rightarrow & \cS_n(2)\\ &\lambda & \mapsto & \{i\larc{1}j\mid i\larc{a}j\in \lambda\}.\end{array}  
\end{equation}
They ignore the part of the data that involves field scalars.
Note that $\cM_n(2)$ and $\cS_n(2)$ are in bijection with the set of partitions
of the set $\{1, 2,\ldots, n\}$. Indeed, the connected components of an
element $\lambda \in \cS_n(2)$ may be viewed as the blocks of a partition
of $\{1, 2,\ldots, n\}$. Composing the map $\pi$ with this bijection
associates a set partition to an element of $\cM_n(q)$ or $\cS_n(q)$, which we call
the \emph{underlying} set partition.
\end{remark}

Fix a nontrivial homomorphism $\vartheta:\FF_q^+\rightarrow \CC^\times$.  For each $\lambda\in \fkn^*$, construct a $\UT_n(q)$-module 
$$V^\lambda=\CC\spanning\{v_\mu\mid \mu\in -\UT_n(q)\cdot\lambda\}$$
with left action given by
$$uv_\mu=\vartheta\big(\mu(u^{-1}-1)\big)v_{u\mu},\qquad\text{for $u\in \UT_n(q)$, $\mu\in  -\UT_n(q)\lambda$}.$$
It turns out that, up to isomorphism, these modules depend only on the two-sided orbit in $\UT_n(q)\backslash \fkn^*/\UT_n(q)$ of $\lambda$.    Furthermore, there is an injective function $\iota:  \cS_n(q)  \rightarrow  \fkn^*$ given by
$$\begin{array}{rccc} \iota(\lambda):& \fkn & \longrightarrow & \FF_q\\ & X & \mapsto & \dd\sum_{i\slarc{a}j\in \lambda} aX_{ij}\end{array}$$
that maps $\cS_n(q)$ onto a natural set of orbit representatives in $\fkn^*$.  We will identify $\lambda\in \cS_n(q)$ with $\iota(\lambda)\in \fkn^*$.

The traces of the modules $V^\lambda$ for $\lambda\in \cS_n(q)$ are the supercharacters of $\UT_n(q)$, and they have a nice supercharacter formula  given by
\begin{equation}\label{SupercharacterFormula}
\chi^\lambda(u_\mu)=\left\{\begin{array}{ll} \displaystyle\frac{q^{\#\{(i,j,k)\mid i<j<k,i\slarc{a}k\in \lambda\}}}{q^{\#\{(i\slarc{a}l,j\slarc{b}k)\in \lambda\times \mu\mid i<j<k<l\}}} \prod_{i\slarc{a}l\in \lambda\atop i\slarc{b}l\in \mu}\vartheta(ab), & \begin{array}{@{}l@{}}\text{if $i\larc{a}k\in \lambda$ and $i<j<k$}\\ \text{implies $i\larc{b}j,j\larc{b}k\notin \mu$,}\end{array}\\ 0, & \text{otherwise,}\end{array}\right.
\end{equation}
where $u_\mu$ has superclass type $\mu$ \cite{ADS}.  Note that the degree of the supercharacter is
\begin{equation}\label{SupercharacterDegree}
\chi^\lambda(1)=\prod_{i\slarc{a}k\in \lambda} q^{k-i-1}.
\end{equation}

Define 
$$\SC=\bigoplus_{n\geq 0} \SC_n,\qquad\text{where}\qquad \SC_n=\CC\spanning\{\chi^\lambda\mid \lambda\in \cS_n(q)\},$$
and let $\SC_0=\CC\spanning\{\chi^\emptyset\}$.  By convention, we write $1=\chi^{\emptyset}$, since this element will be the identity of our Hopf algebra.
Note that since $\SC_n$ is in fact the space of superclass functions of $\UT_n(q)$, it also has another distinguished basis, the superclass characteristic functions, 
$$\SC_n=\CC\spanning\{\kappa_\mu\mid \mu\in \cS_n(q)\},\qquad \text{where}\qquad \kappa_\mu(u)=\left\{\begin{array}{ll} 1, &\text{if $u$ has superclass type $\mu$,}\\ 0, &\text{otherwise,}\end{array}\right.$$
and $\kappa_\emptyset=\chi^\emptyset$.  Section 3 will explore a Hopf structure for this space.

We conclude this section by remarking that with respect to the usual inner product on class functions 
$$\langle \chi,\psi\rangle =\frac{1}{|\UT_n(q)|}\sum_{u\in \UT_n(q)} \chi(u)\overline{\psi(u)}$$
the supercharacters are orthogonal.  In fact, for $\lambda,\mu\in \cS_n(q)$,
\begin{equation}\label{InnerProductSupercharacters}
\langle \chi^\lambda,\chi^\mu\rangle=\delta_{\lambda\mu} q^{C(\lambda)}, \qquad \text{where } C(\lambda)=\#\{(i,j,k,l)\mid i\larc{a}k,j\larc{b}l\in \lambda\}.\end{equation}
In particular, this inner product remains nondegenerate on $\SC_n$.

\subsection{Representation theoretic functors on $\SC$} \label{BackgroundRepresentationTheory}

We will focus on a number of representation theoretic operations on the space $\SC$.   For $J=(J_1|J_2| \cdots|J_\ell)$ any set composition of $\{1,2,\ldots, n\}$, let
\begin{equation*}
\UT_J(q)=\{u\in \UT_n(q)\mid u_{ij}\neq 0 \text{ with  $i<j$ implies $i,j$ are in the same part of $J$}\}.
\end{equation*}
In the remainder of the paper we will need variants of a straightening map on set compositions.  For each set composition $J=(J_1| J_2| \cdots| J_\ell)$, let 
\begin{equation}\label{StraighteningSetPartitions}
\st_J([n])=\st_{J_1}(J_1)\times \st_{J_2}(J_2)\times \cdots\times \st_{J_\ell}(J_\ell),\end{equation}
where for $K\subseteq [n]$, $\st_K:K\longrightarrow [|K|]$ is the unique order preserving map.
For example,\\ $\st_{(14|3|256)}([6])=\{1,2\}\times \{1\}\times\{1,2,3\}$. 
 
We can extend this straightening map to a canonical isomorphism
\begin{equation} \label{GroupStraightening}
\st_J:\UT_J(q)\longrightarrow \UT_{|J_1|}(q)\times \UT_{|J_2|}(q)\times \cdots \times \UT_{|J_\ell|}(q)
\end{equation}
by reordering the rows and columns according to (\ref{StraighteningSetPartitions}).
For example, if $J=(14|3|256)$, then
$$\UT_J(q)\ni\left(\begin{array}{@{\ }c@{\ }c@{\ }c@{\ }c@{\ }c@{\ }c@{\ }} 
1 & 0 & 0 & a & 0 & 0 \vspace{-.1cm}\\
0 & 1 & 0 & 0 & b & c\vspace{-.1cm}\\
0 & 0 & 1 & 0 & 0 & 0\vspace{-.1cm}\\
0 & 0 & 0 & 1 & 0 & 0\vspace{-.1cm}\\
0 & 0 & 0 & 0 & 1 & d\vspace{-.1cm}\\
0 & 0 & 0 & 0 & 0 & 1\end{array}\right) \overset{\st_J}{\longmapsto} \left(\left(\begin{array}{@{\ }c@{\ }c@{\ }} 
1 &  a\vspace{-.1cm}\\
0 & 1 \end{array}\right),(1),\left(\begin{array}{@{\ }c@{\ }c@{\ }c@{\ }} 
1 &  b & c\vspace{-.1cm}\\
0 & 1 & d\vspace{-.1cm}\\
0 & 0 & 1\end{array}\right)\right)\in \UT_2(q)\times\UT_1(q)\times  \UT_3(q) .$$
Combinatorially, if  $J=(J_1|J_2|\cdots|J_\ell)$ we let
$$\cS_J(q)=\{\lambda\in \cS_n(q)\mid i\larc{a}j\in \lambda\text{ implies $i,j$ are in the same part in } J\}.$$
Then we obtain the bijection
\begin{equation}\label{StraighteningLabeledSetPartitions}
\mathrm{st}_J: \cS_J(q) \longrightarrow    \cS_{|J_1|}(q)\times \cS_{|J_2|}(q)\times \cdots \times \cS_{|J_\ell|}(q)
\end{equation}
that relabels the indices using the straightening map (\ref{StraighteningSetPartitions}).  For example, if $J=14|3|256$, then
$$
\st_J\bigg(\begin{tikzpicture}[baseline=.2cm]
	\foreach \x in {1,...,6} 
		\node (\x) at (\x/2,0) [inner sep=0pt] {$\bullet$};
	\foreach \x in {1,...,6} 
		\node at (\x/2,-.25) {$\scs\x$};
	\draw (1) .. controls (2/2,.75) and (3/2,.75) ..  node  [above] {$\scs a$} (4); 
	\draw (2) .. controls (3.25/2,1) and (4.75/2,1) .. node [above] {$\scs b$} (6);
\end{tikzpicture}\bigg)
= 
\begin{tikzpicture}[baseline=.2cm]
	\foreach \x in {1,2} 
		\node (\x) at (\x/2,0) [inner sep=0pt] {$\bullet$};
	\foreach \x in {1,2} 
		\node at (\x/2,-.25) {$\scs\x$};
	\draw (1) .. controls (1.25/2,.25) and (1.75/2,.25) ..  node  [above] {$\scs a$} (2); 
\end{tikzpicture}
\times \begin{tikzpicture}[baseline=.2cm]
	\foreach \x in {1} 
		\node (\x) at (\x/2,0) [inner sep=0pt] {$\bullet$};
	\foreach \x in {1} 
		\node at (\x/2,-.25) {$\scs\x$};
\end{tikzpicture}
\times
\begin{tikzpicture}[baseline=.2cm]
	\foreach \x in {1,...,3} 
		\node (\x) at (\x/2,0) [inner sep=0pt] {$\bullet$};
	\foreach \x in {1,...,3} 
		\node at (\x/2,-.25) {$\scs\x$};
	\draw (1) .. controls (1.5/2,.5) and (2.5/2,.5) ..  node  [above] {$\scs b$} (3); 
\end{tikzpicture}$$
Note that $\UT_m(q)\times \UT_n(q)$ is an algebra group, so it has a supercharacter theory with the standard construction \cite{DI08} such that 
$$\SC(\UT_m(q)\times\UT_n(q))\cong \SC_m\otimes \SC_n.$$
The combinatorial map (\ref{StraighteningLabeledSetPartitions}) preserves supercharacters across this isomorphism.

The first two operations of interest are restriction
$$ \begin{array}{rccc} {}^J\Res^{\UT_n(q)}_{\st_J(\UT_J(q))}: &   \SC_n &   \longrightarrow & \SC_{|J_1|}\otimes \SC_{|J_2|}\otimes\cdots \otimes \SC_{|J_\ell|}\\
& \chi & \mapsto & \Res^{\UT_n(q)}_{\UT_J(q)}(\chi)\circ \st_J^{-1},
\end{array}$$
or
$$ {}^J\Res^{\UT_n(q)}_{\st_J(\UT_J(q))}(\chi)(u)= \chi(\st_J^{-1}(u)), \qquad \text{for $u\in \UT_{|J_1|}(q)\times\cdots\times\UT_{|J_\ell|}(q)$},$$
and its Frobenius adjoint map superinduction
$$\begin{array}{rccc} {}^J\SInd^{\UT_n(q)}_{\st_J(\UT_J(q))}:  &  \SC_{|J_1|}\otimes \SC_{|J_2|}\otimes\cdots \otimes \SC_{|J_\ell|} &\longrightarrow  &\SC_n\\
&\chi & \mapsto & \SInd^{\UT_n(q)}_{\UT_J(q)}(\st_J^{-1}(\chi)),\end{array}$$
where for a superclass function $\chi$ of $\UT_{J}(q)$,
$$\SInd^{\UT_n(q)}_{\UT_J(q)}(\chi)(u) = \frac{1}{|\UT_J(q)|^2}\sum_{x,y\in \UT_n(q)\atop x(u-1)y+1\in \UT_J(q)} \chi(x(u-1)y+1),\qquad\text{for $u\in \UT_n(q)$}.$$
Note that under the usual inner product on characters,
$$\left\langle \SInd^{\UT_n(q)}_{\UT_J(q)}(\psi), \chi\right\rangle=\left\langle \psi, \Res^{\UT_n(q)}_{\UT_J(q)}(\chi)\right\rangle.$$

\begin{remarks}\hfill

\begin{enumerate}
\item[(a)] While superinduction takes superclass functions to superclass functions, a superinduced character may not be the trace of a representation.   Therefore,  $\SInd$ is not really a functor on the module level.  An exploration of the relationship between superinduction and induction can be found in \cite{MT09}.
\item[(b)]  There is an algorithmic method for computing restrictions of supercharacters (and also tensor products of characters) \cite{Th,TV09}.   This has been implemented in Sage (see the Appendix, below).
\end{enumerate}
\end{remarks}

For an integer composition $(m_1,m_2,\ldots, m_\ell)$ of $n$, let
$$\UT_{(m_1,m_2,\ldots, m_\ell)}(q)=\UT_{(1,\ldots,m_1\mid m_1+1,\ldots, m_1+m_2\mid \cdots | n-m_\ell+1,\ldots,n)}(q) \subseteq \UT_{m_1+\cdots +m_\ell}(q).$$
There is a surjective homomorphism $\tau:\UT_n(q)\rightarrow \UT_{(m_1,m_2,\ldots, m_\ell)}(q)$ such that $\tau^2=\tau$ ($\tau$ fixes the subgroup $\UT_{(m_1,m_2,\ldots, m_\ell)}(q)$ and sends the normal complement to 1).  The next two operations arise naturally from this situation.  We have inflation
$$\Inf^{\UT_n(q)}_{\UT_{(m_1,m_2,\ldots, m_\ell)}(q)}: \SC_{m_1}\otimes \SC_{m_2}\otimes\cdots \otimes \SC_{m_\ell} \longrightarrow \SC_n,$$
where
$$\Inf^{\UT_n(q)}_{\UT_{(m_1,m_2,\ldots, m_\ell)}(q)}(\chi)(u)=\chi(\tau(u)), \qquad\text{for $u\in \UT_n(q)$},$$
and its Frobenius adjoint map deflation
$$\Def^{\UT_n(q)}_{\UT_{(m_1,m_2,\ldots, m_\ell)}(q)}: \SC_n\longrightarrow \SC_{m_1}\otimes \SC_{m_2}\otimes\cdots \otimes \SC_{m_\ell}, $$
where
$$\Def^{\UT_n(q)}_{\UT_{(m_1,m_2,\ldots, m_\ell)}(q)}(\chi)(u)=\frac{1}{|\ker(\tau)|}\sum_{v\in \tau^{-1}(u)} \chi(v), \qquad \text{for $u\in \UT_{(m_1,m_2,\ldots, m_\ell)}(q)$}.$$
On supercharacters, the inflation map is particularly nice, and is given combinatorially by
$$\Inf^{\UT_n(q)}_{\UT_{(m_1,m_2,\ldots, m_\ell)}(q)}(\chi^{\lambda_1}\times \chi^{\lambda_2}\times\cdots \times \chi^{\lambda_\ell})=\chi^{\lambda_1\mid \lambda_2 \mid \cdots \mid \lambda_\ell},$$
where $\lambda_1 \mid \lambda_2 \mid \cdots \mid \lambda_\ell$ is as in (\ref{SetPartitionConcatenation}) (see for example \cite{Th}).

\subsection{Hopf algebra basics}\label{BackgroundHopfAlgebras}

Hopf algebras arise naturally in combinatorics and algebra, where there are ``things" that break into parts that can also be put together with some compatibility between operations \cite{JR79}.  They have emerged as a central object of study in algebra through quantum groups \cite{CP94,Dr, SS93} and in combinatorics \cite{ABS06, AM10, HNT}.  Hopf algebras find applications in diverse fields  such as algebraic topology,
representation theory, and mathematical physics.

We suggest the first few chapters of \cite{SS93} for a motivated introduction and \cite{Mo93} as a  basic text.  Each has extensive references.  The present section gives definitions to make our exposition self-contained.

Let $\cA$ be an associative algebra with unit $1$ over a field $\KK$.  The unit can be associated with a map
$$\begin{array}{rccc} u: & \KK & \longrightarrow & \cA\\ & t & \mapsto & t \cdot 1\end{array}$$
A \emph{coalgebra} is a vector space $C$ over $\KK$ with two $\KK$-linear maps: the coproduct $\Delta:C\rightarrow C\otimes C$ and a counit $\veps:C\rightarrow \KK$.  The coproduct must be coassociative (as a map from $C$ to $C\otimes C\otimes C$), so that $(\Delta\otimes \Id)\circ \Delta=(\Id\otimes\Delta)\circ \Delta,$ or for $a\in C$,
$$\sum_{j} \Delta(b_j)\otimes c_j=\sum_j b_j\otimes \Delta(c_j), \qquad \text{if} \qquad \Delta(a)=\sum_{j} b_j\otimes c_j.$$
The counit must be compatible with the coproduct, so that $(\veps\otimes \Id)\circ\Delta=(\Id\otimes\veps)\circ\Delta=\Id,$ where we identify $C$ with $\KK\otimes C$ and $C\otimes \KK$.  More explicitly,
$$a=\sum_j \veps(b_j) c_j=\sum_j b_j\veps(c_j), \qquad \text{if} \qquad \Delta(a)=\sum_j b_j\otimes c_j.$$
  A map $\vphi:C\rightarrow D$ between coalgebras is a \emph{coalgebra map} if $\Delta_D\circ\vphi=(\vphi\otimes\vphi)\circ \Delta_C$, where $\Delta_C$ and $\Delta_D$ are the coproducts of $C$ and $D$, respectively. A subspace $I\subseteq C$ is a \emph{coideal} if $\Delta_C(I)\subseteq I\otimes C + C\otimes I$ and $\veps(I)=0$.  In this case, the quotient space $C/I$ is a coalgebra.

An algebra that is also a coalgebra is a \emph{bialgebra} if the operations are compatible: for the coproduct and product,
$$\Delta(xy)=\Delta(x)\Delta(y)\qquad \text{where}\qquad (a\otimes b)(c\otimes d)=ac\otimes bd,$$
for the counit and product,
$$\veps(xy)=\veps(x)\veps(y),$$ 
for the unit and coproduct
$$\Delta\circ u=(u\otimes u)\circ \delta,\qquad \text{where}\qquad \begin{array}{rccc} \delta: & \KK & \longrightarrow & \KK\otimes \KK\\ & t & \mapsto & t\otimes t,\end{array}$$
and for the counit and unit,
$$\veps\circ u=\Id.$$

For example, the group algebra $\KK[G]$ becomes a bialgebra under the maps $\Delta(g)=g\otimes g$ and $\veps(g)=1$ for all $g\in G$, and the polynomial algebra $\KK[x_1,\ldots, x_n]$ becomes a bialgebra under the operations $\Delta(x_i)=x_i\otimes 1 + 1\otimes x_i$ and $\veps(x_i)=0$ for $1\leq i\leq n$.  

A bialgebra is \emph{graded} if there is a direct sum decomposition 
$$\cA=\bigoplus_{n\geq 0} \cA_n,$$
such that $\cA_i\cA_j\subseteq \cA_{i+j}$, $u(\KK)\subseteq \cA_0$, $\Delta(\cA_n)\subseteq \bigoplus_{j=0}^n\cA_j\otimes \cA_{n-j}$ and $\veps(\cA_n)=0$ for all $n\geq 1$.  It is connected if $\cA_0\cong \KK$.  For example, the polynomial algebra is graded by polynomial degree.  In a bialgebra, an ideal that is also a coideal is called a \emph{biideal}, and the quotient is a bialgebra.

A Hopf algebra is a bialgebra with an antipode.  This is a linear map $S:\cA\rightarrow \cA$ such that if $\Delta(a)=\sum_k a_{k}\otimes a_k'$, then
\begin{equation}\label{AntipodeCondition}
\sum_k a_k S(a_k')=\veps(a)\cdot 1=\sum_k S(a_k)a_k'.
\end{equation}
For example, the bialgebra $\KK[G]$ has antipode $S(g)=g^{-1}$ and the bialgebra $\KK[x_1,\ldots,x_n]$ has antipode $S(x_i)=-x_i$.  More generally, if $\cA$ is a connected, graded bialgebra, then (\ref{AntipodeCondition}) can be solved inductively to give $S(t\cdot 1)=t\cdot 1$ for $t\cdot 1 \in \cA_0$, and for $a\in \cA_n$,
\begin{equation}\label{AntipodeRecursion}
S(a)=-a-\sum_{j=1}^{n-1} S(a_j)a_{n-j}',\qquad \text{where}\qquad \Delta(a)=a\otimes 1+1\otimes a+\sum_{j=1}^{n-1} a_j\otimes a_{n-j}'.
\end{equation}
Thus, any graded, connected bialgebra has an antipode and is automatically a Hopf algebra.

If $\mathcal{A}$ is a graded bialgebra (Hopf algebra) and each
$\mathcal{A}_n$ is finite-dimensional, then the graded dual
\[
\mathcal{A} = \bigoplus_{n\geq 0} \mathcal{A}_n^*
\]
is also a bialgebra (Hopf algebra). If $\mathcal{A}$ is commutative
(cocommutative), then
$\mathcal{A}^*$ is cocommutative (commutative).

\subsection{The Hopf algebra $\WSym$} \label{BackgroundNCSym}

Symmetric polynomials in a set of commuting variables $X$ are the
invariants  of the action of the symmetric group 
$\SG_X$ of $X$ by automorphisms of the polynomial algebra $\KK[X]$ over a field $\KK$.

When $X=\{x_1,x_2,\ldots\}$ is infinite, we let $\SG_X$ be the set of bijections on $X$ with finitely many nonfixed points.  Then the subspace of $\KK[[X]]^{\SG_X}$ of formal power series with bounded degree is the algebra
of symmetric functions $\mathrm{Sym}(X)$ over $\KK$. It has a natural bialgebra structure
defined by
\begin{equation}
\Delta(f) =  \sum_k f_k'\otimes f_k'',
\end{equation}
where the $f_k', f_k''$ are defined by the identity
\begin{equation}
f(X'+X'')   = \sum_k f_k'(X')f_k''(X''),
\end{equation}
and $X'+X''$ denotes the disjoint union of two copies of $X$. 
The advantage of defining the coproduct in this way is that $\Delta$  is clearly coassociative and
that it is obviously a morphism for the product. 
For each \emph{integer} partition $\lambda=(\lambda_1,\lambda_2,\ldots, \lambda_\ell)$, the monomial symmetric function corresponding to $\lambda$ is the sum
\begin{equation}
m_\lambda(X)=\sum_{x^\alpha\in O(x^\lambda)} x^\alpha
\end{equation}
over elements of the orbit $O(x^\lambda)$ of $x^\lambda=x_1^{\lambda_1}x_2^{\lambda_2}\cdots x_{\ell}^{\lambda_{\ell}}$ under $\SG_X$, and the monomial symmetric functions form a basis of $\mathrm{Sym}(X)$. The coproduct of a monomial function is 
\begin{equation}
\Delta (m_\lambda) = \sum_{\mu\cup\nu=\lambda}m_\mu\otimes m_\nu\,.
\end{equation}
The dual basis $m^*_\lambda$ of $m_\lambda$ is a multiplicative basis of the
graded dual $\Sym^*$, which turns out to be isomorphic to $\Sym$ via the identification
$m_n^*=h_n$ (the complete homogeneous function, the sum of all monomials of degree $n$).

The case of noncommuting variables is very similar. Let $A$ be
an alphabet, and consider the invariants of $\SG_A$ acting by automorphisms
on the free algebra $\KK\langle A\rangle$. Two words $a=a_1a_2\cdots a_n$ and $b=b_1b_2\cdots b_n$
are in the same orbit whenever $a_i=a_j$ if and only if $b_i=b_j$. Thus, orbits
are parametrized by set partitions  in at most $|A|$ blocks. Assuming as above
that $A$ is infinite, we obtain an algebra based on all set partitions,
defining the monomial basis by
\begin{equation}
\bm_\lambda(A)=\sum_{w\in O_\lambda}w,
\end{equation}
where $O_\lambda$ is the set of words such that $w_i=w_j$ if and only if $i$ and $j$ are
in the same block of $\lambda$.

One can introduce a bialgebra structure by means of the coproduct
\begin{equation}
\Delta(f) =  \sum_k f_k'\otimes f_k''    \qquad \text{where}\qquad           f(A'+A'')   = \sum_k f_k'(A')f_k''(A''),
\end{equation}
and $A'+A''$ denotes the disjoint union of two mutually commuting copies of
$A$. The coproduct of a monomial function is 
\begin{equation}\label{MonomialCoproduct}
\Delta(\bm_\lambda) = \sum_{J\subseteq [\ell(\lambda)]} \bm_{\mathrm{st}(\lambda_J)}\otimes \bm_{\mathrm{st}(\lambda_{J^c})}.
\end{equation}
This coproduct is  cocommutative. With the unit that sends $1$ to $m_\emptyset$ and the counit $\varepsilon( f(A))=f(0,0,\ldots)$, we have that $\NCSym$ is a connected graded bialgebra and therefore a graded Hopf algebra.

\begin{remark}
We again note that $\NCSym$ is often denoted in the literature as $\mathbf{NCSym}$ or $\mathbf{WSym}$. 
\end{remark}

\section{A Hopf algebra realization of $\SC$}\label{PiHopfAlgebra}

This section explicitly defines the Hopf structure on $\SC$ from a representation theoretic point of view.  We then work out the combinatorial consequences of these rules, and it directly follows that $\SC\cong \NCSym$ for $q=2$.  We then proceed to yield a ``colored" version of $\NCSym$ that will give the corresponding Hopf structure for the other values of $q$.

\subsection{The correspondence between $\SC$ and $\NCSym$}

In this section we describe a Hopf structure for the space 
\begin{align*}
\SC &= \bigoplus_{n\geq 0} \SC_n\\ 
&=\CC\spanning\{\kappa_\mu \mid \mu \in \cS_n(q),n\in \ZZ_{\geq 0}\}\\
&=\CC\spanning\{\chi^\lambda \mid \lambda \in \cS_n(q),n\in \ZZ_{\geq 0}\}.
\end{align*}
The product and coproduct are defined representation theoretically by the inflation and restriction operations of Section \ref{BackgroundRepresentationTheory},
\begin{equation}\label{InflationProduct}
\chi \cdot \psi=\Inf_{\UT_{(a,b)}(q)}^{\UT_{a+b}(q)}(\chi \times \psi), \qquad \text{where  $\chi\in \SC_a, \psi\in \SC_b$},
\end{equation}
and
\begin{equation}\label{RestrictionCoProduct}
\Delta(\chi)=\sum_{J=(A|A^c)\atop A\subset [n]} {}^{J}\Res_{\UT_{|A|}(q)\times \UT_{|A^c|}(q)}^{\UT_n(q)}(\chi), \qquad \text{for $\chi\in \SC_n$}.
\end{equation}
For a combinatorial description of the Hopf structure of $\SC$ it is most convenient to work with the superclass characteristic functions.  A matrix description appears in (\ref{IntroProduct}) and (\ref{IntroCoProduct}).
\begin{proposition} \label{CombinatorialSuperclassFunctions}\hfill

\begin{enumerate}
\item[(a)] For  $\mu\in \cS_k(q)$, $\nu\in \cS_{n-k}(q)$,
\begin{equation*}
\kappa_\mu\cdot \kappa_\nu=\sum_{\lambda=\mu\sqcup \gamma\sqcup (k+\nu)\in \cS_n(q)\atop   i\slarc{\scscs a}l\in \gamma \text{ implies } i\leq k < l}\kappa_\lambda,
\end{equation*}
where $(k+\nu)=\{(k+i)\larc{a}(k+j)\mid i\larc{a}j\in \nu\}$ and $\sqcup$ denotes disjoint union.
\item[(b)] For $\lambda\in \cS_n(q)$,
\begin{equation*}
\Delta(\kappa_\lambda)=\sum_{{\lambda=\mu\sqcup \nu\atop \mu\in \cS_A(q), \nu\in \cS_{A^c}(q)}\atop A\subseteq \{1,2,\ldots,n\}} \kappa_{\mathrm{st}_A(\mu)}\otimes \kappa_{\mathrm{st}_{A^c}(\nu)}.
\end{equation*}
\end{enumerate}
\end{proposition}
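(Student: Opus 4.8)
The plan is to translate the representation-theoretic definitions (\ref{InflationProduct}) and (\ref{RestrictionCoProduct}) into combinatorics by evaluating both sides on a generic superclass, using the fact that $\{\kappa_\lambda\}$ is a basis dual to the superclasses. For part (b), the coproduct, I would start with the definition $\Delta(\kappa_\lambda) = \sum_{J=(A|A^c)} {}^J\Res^{\UT_n(q)}_{\UT_{|A|}(q)\times\UT_{|A^c|}(q)}(\kappa_\lambda)$. Restriction of a class function is just its restriction as a function, precomposed with $\st_J^{-1}$; so for a superclass representative $u\in\UT_J(q)$ I need to determine when $\st_J(u)$ lies in the superclass of $X_\lambda$. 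Since $\UT_J(q)$ consists of matrices whose nonzero off-diagonal entries occur only within the parts $A$ and $A^c$, the superclass representative of $u-1$ (obtained by row/column operations confined to $\UT_J(q)$) is an element of $\cS_J(q)$, i.e. an $\FF_q^\times$-set partition $\rho$ all of whose arcs stay inside $A$ or inside $A^c$. Thus ${}^J\Res(\kappa_\lambda)$ is the indicator of those $(A,A^c)$-pairs whose combined data is exactly $\lambda$; writing $\rho = \mu\sqcup\nu$ with $\mu\in\cS_A(q)$, $\nu\in\cS_{A^c}(q)$, the condition is precisely $\lambda=\mu\sqcup\nu$, and after applying $\st_A^{-1}\times\st_{A^c}^{-1}$ we get the term $\kappa_{\st_A(\mu)}\otimes\kappa_{\st_{A^c}(\nu)}$. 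Summing over all $A$ gives exactly the stated formula.

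For part (a), the product, I would similarly evaluate $\kappa_\mu\cdot\kappa_\nu = \Inf^{\UT_n(q)}_{\UT_{(k,n-k)}(q)}(\kappa_\mu\times\kappa_\nu)$ on a superclass representative $u\in\UT_n(q)$. By definition of inflation, $\Inf(\chi)(u) = \chi(\tau(u))$ where $\tau:\UT_n(q)\to\UT_{(k,n-k)}(q)$ is the retraction killing the normal complement (i.e. zeroing out the upper-right $k\times(n-k)$ block). So $\kappa_\mu\cdot\kappa_\nu$ is the indicator of those superclasses $\lambda$ whose image under $\tau$ has superclass type $\mu$ on the first block and $\nu$ on the second. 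Concretely, if $\lambda\in\cS_n(q)$ and we delete all arcs of $\lambda$ that cross from $\{1,\dots,k\}$ to $\{k+1,\dots,n\}$, the two halves must become exactly $\mu$ and $k+\nu$. Equivalently $\lambda = \mu\sqcup\gamma\sqcup(k+\nu)$ where $\gamma$ consists solely of ``crossing'' arcs $i\larc{a}l$ with $i\le k<l$ — and this is exactly the indexing set in the claim. The extra care needed here is to check that $\tau$ being a homomorphism is compatible with the identification of $\SC_k\otimes\SC_{n-k}$ with $\SC(\UT_{(k,n-k)}(q))$ via $\st$, which is straightforward since $\UT_{(k,n-k)}(q)\cong\UT_k(q)\times\UT_{n-k}(q)$ canonically.

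The main obstacle — more a bookkeeping point than a genuine difficulty — is verifying that ``row/column reduction of $u-1$ confined to $\UT_J(q)$'' genuinely produces the $\cS_J(q)$-representative, i.e. that the superclass theory of the (algebra) subgroup $\UT_J(q)$ is compatible with the ambient one in the way the notation suggests. This is exactly the statement, recalled in Section~\ref{BackgroundRepresentationTheory}, that the standard supercharacter-theory construction for the algebra group $\UT_J(q)\cong\prod_i\UT_{|J_i|}(q)$ is respected by $\st_J$, so that superclasses of $\UT_J(q)$ correspond to $\cS_J(q)$ and $\st_J$ sends them to tuples of $\cS_{|J_i|}(q)$. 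Granting this, both computations reduce to unwinding which matrices land in which superclass after applying $\tau$ (for the product) or after restriction (for the coproduct), and the combinatorial descriptions fall out immediately. I would also remark that part (a) can alternatively be checked directly against the matrix formula (\ref{IntroProduct}) — the ``ways of placing $X'$ in the upper-right block keeping at most one nonzero entry per row and column'' are precisely the choices of crossing data $\gamma$ — which serves as a useful sanity check.
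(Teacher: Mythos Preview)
Your proposal is correct and follows essentially the same route as the paper: for (a) you evaluate the inflated function on a superclass representative $u_\lambda$ via the retraction $\tau$ that zeros the upper-right block, and for (b) you evaluate the restricted function on a superclass representative of $\UT_{A|A^c}(q)$, in each case reading off exactly when the indicator is $1$. The only difference is presentational --- the paper states (b) by fixing $(\mu,\nu)$ and asking when $\kappa_\lambda(u_{\mu\times\nu})=1$, whereas you phrase it as determining the $\cS_J(q)$-type of a generic $u\in\UT_J(q)$ --- but the content is identical.
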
\label{SuperclassFunctionsOperations}
\begin{proof}
(a) Let $(u_\lambda-1)\in \fkn_n$ be the natural orbit representative for the superclass corresponding to $\lambda$.  Then 
$$\Inf_{\UT_k(q)\times\UT_{n-k}(q)}^{\UT_n(q)}(\kappa_\mu\times\kappa_\nu)(u_\lambda)=(\kappa_\mu\times\kappa_\nu)(\tau(u_\lambda)),$$
where
$$\begin{array}{r@{}ccc}\tau: & \UT_n(q) & \longrightarrow & \UT_k(q)\times\UT_{n-k}(q)\\  & \left(\begin{array}{c|c} A & C\\  \hline 0 & B\end{array}\right) & \mapsto & \left(\begin{array}{c|c} A & 0\\ \hline 0 & B\end{array}\right).\end{array}$$
Thus, 
$$\Inf_{\UT_k(q)\times\UT_{n-k}(q)}^{\UT_n(q)}(\kappa_\mu\times\kappa_\nu)(u_\lambda)=\left\{\begin{array}{ll} 1, & \begin{array}{@{}l}\text{if $\mu=\{i\larc{a}j\in \lambda\mid i,j\in [k]\}$,}\\ \text{and $\nu+k=\{i\larc{a}j\in \lambda\mid i,j\in [k]^c\}$,}\end{array}\\
0, & \text{otherwise,}\end{array}\right.
$$
as desired.

(b)  Let $\mu\in \cS_A(q)$, and $\nu\in \cS_{A^c}(q)$, and let $u_{\mu\times \nu}$ be the corresponding superclass representative for $\UT_{A|A^c}(q)$.  Note that 
\begin{align*}
\Res_{\UT_{A|A^c}(q)}^{\UT_n(q)}(\kappa_\lambda)(u_{\mu\times \nu})&=\left\{\begin{array}{ll} 1, & \text{if $\kappa_\lambda(u_{\mu\times \nu})=1$,}\\ 0, & \text{otherwise,}\end{array}\right.\\
&=\left\{\begin{array}{ll} 1, & \begin{array}{@{}l} \text{if $\mu=\{i\larc{a}j\in \lambda\mid i,j\in A\}$ and}\\ \nu=\{i\larc{a}j\in \lambda\mid i,j\in A^c\},\end{array}\\ 0, & \text{otherwise.}\end{array}\right.
\end{align*}
Thus, if $\mu=\{i\larc{a}j\in \lambda\mid i,j\in A\}$ and $\nu=\{i\larc{a}j\in \lambda\mid i,j\in A^c\}$, then
$$\Res_{\UT_{A|A^c}(q)}^{\UT_n(q)}(\kappa_\lambda)=\kappa_{\mu}\otimes \kappa_{\nu},$$
and the result follows by applying the $\st_J$ map.
\end{proof}

\begin{example}  We have
\begin{align*}
\kappa\hspace{-.1cm}
\begin{tikzpicture}[baseline=.2cm]
	\foreach \x in {1,2,3} 
		\node (\x) at (\x/4,0) [inner sep=0pt] {$\scs\bullet$};
	\foreach \x in {1,2,3} 
		\node at (\x/4,-.2) {$\scscs\x$};
	\draw (1) .. controls (1.75/4,.3) and (2.25/4,.3) ..  node [above=-2pt] {$\scs a$} (3); 
\end{tikzpicture}
\cdot
\kappa\hspace{-.1cm}
\begin{tikzpicture}[baseline=.2cm]
	\foreach \x in {1,2,3,4} 
		\node (\x) at (\x/4,0) [inner sep=0pt] {$\scs\bullet$};
	\foreach \x in {1,2,3,4} 
		\node at (\x/4,-.2) {$\scscs\x$};
	\draw (1) .. controls (1.25/4,.25) and (1.75/4,.25) ..  node [above=-2pt] {$\scs b$} (2); 
	\draw (2) .. controls (2.75/4,.3) and (3.25/4,.3) ..  node [above=-2pt] {$\scs c$} (4);
\end{tikzpicture}
=&
\kappa \hspace{-.1cm}
\begin{tikzpicture}[baseline=.2cm]
	\foreach \x in {1,...,7} 
		\node (\x) at (\x/4,0) [inner sep=0pt] {$\scs\bullet$};
	\foreach \x in {1,...,7} 
		\node at (\x/4,-.2) {$\scscs\x$};
	\draw (1) .. controls (1.75/4,.3) and (2.25/4,.3) ..  node [above=-2pt] {$\scs a$} (3); 
	\draw (4) .. controls (4.25/4,.25) and (4.75/4,.25) ..  node [above=-2pt] {$\scs b$} (5); 
	\draw (5) .. controls (5.75/4,.3) and (6.25/4,.3) ..  node [above=-2pt] {$\scs c$} (7);
\end{tikzpicture}+\sum_{d\in \FF_q^\times}
\bigg(
\kappa \hspace{-.1cm}
\begin{tikzpicture}[baseline=.2cm]
	\foreach \x in {1,...,7} 
		\node (\x) at (\x/4,0) [inner sep=0pt] {$\scs\bullet$};
	\foreach \x in {1,...,7} 
		\node at (\x/4,-.2) {$\scscs\x$};
	\draw (1) .. controls (1.75/4,.3) and (2.25/4,.3) ..  node [above=-2pt] {$\scs a$} (3); 
	\draw (3) .. controls (3.25/4,.25) and (3.75/4,.25) ..  node [above=-2pt] {$\scs d$} (4); 
	\draw (4) .. controls (4.25/4,.25) and (4.75/4,.25) ..  node [above=-2pt] {$\scs b$} (5); 
	\draw (5) .. controls (5.75/4,.3) and (6.25/4,.3) ..  node [above=-2pt] {$\scs c$} (7);
\end{tikzpicture}+
\kappa\hspace{-.1cm}
\begin{tikzpicture}[baseline=.2cm]
	\foreach \x in {1,...,7} 
		\node (\x) at (\x/4,0) [inner sep=0pt] {$\scs\bullet$};
	\foreach \x in {1,...,7} 
		\node at (\x/4,-.2) {$\scscs\x$};
	\draw (1) .. controls (1.75/4,.3) and (2.25/4,.3) ..  node [above=-2pt] {$\scs a$} (3); 
	\draw (2) .. controls (2.75/4,.3) and (3.25/4,.3) ..  node [above=-2pt] {$\scs d$} (4); 
	\draw (4) .. controls (4.25/4,.25) and (4.75/4,.25) ..  node [above=-2pt] {$\scs b$} (5); 
	\draw (5) .. controls (5.75/4,.3) and (6.25/4,.3) ..  node [above=-2pt] {$\scs c$} (7);
\end{tikzpicture}
+\kappa\hspace{-.1cm}
\begin{tikzpicture}[baseline=.2cm]
	\foreach \x in {1,...,7} 
		\node (\x) at (\x/4,0) [inner sep=0pt] {$\scs\bullet$};
	\foreach \x in {1,...,7} 
		\node at (\x/4,-.2) {$\scscs\x$};
	\draw (1) .. controls (1.75/4,.3) and (2.25/4,.3) ..  node [above=-2pt] {$\scs a$} (3); 
	\draw (3) .. controls (4/4,.65) and (5/4,.65) ..  node [above=-2pt] {$\scs d$} (6); 
	\draw (4) .. controls (4.25/4,.25) and (4.75/4,.25) ..  node [above=-2pt] {$\scs b$} (5); 
	\draw (5) .. controls (5.75/4,.3) and (6.25/4,.3) ..  node [above=-2pt] {$\scs c$} (7);
\end{tikzpicture}
+\kappa\hspace{-.1cm}
\begin{tikzpicture}[baseline=.2cm]
	\foreach \x in {1,...,7} 
		\node (\x) at (\x/4,0) [inner sep=0pt] {$\scs\bullet$};
	\foreach \x in {1,...,7} 
		\node at (\x/4,-.2) {$\scscs\x$};
	\draw (1) .. controls (1.75/4,.3) and (2.25/4,.3) ..  node [above=-2pt] {$\scs a$} (3); 
	\draw (2) .. controls (3.25/4,.7) and (4.75/4,.7) ..  node [above=-2pt] {$\scs d$} (6); 
	\draw (4) .. controls (4.25/4,.25) and (4.75/4,.25) ..  node [above=-2pt] {$\scs b$} (5); 
	\draw (5) .. controls (5.75/4,.3) and (6.25/4,.3) ..  node [above=-2pt] {$\scs c$} (7);
\end{tikzpicture}\bigg)\\
&+\sum_{d,e\in\FF_q^\times}\bigg( 
\kappa\hspace{-.1cm}
\begin{tikzpicture}[baseline=.2cm]
	\foreach \x in {1,...,7} 
		\node (\x) at (\x/4,0) [inner sep=0pt] {$\scs\bullet$};
	\foreach \x in {1,...,7} 
		\node at (\x/4,-.2) {$\scscs\x$};
	\draw (1) .. controls (1.75/4,.3) and (2.25/4,.3) ..  node [above=-2pt] {$\scs a$} (3); 
	\draw (2) .. controls (3.25/4,.7) and (4.75/4,.7) ..  node [above=-2pt] {$\scs d$} (6); 
	\draw (3) .. controls (3.25/4,.25) and (3.75/4,.25) ..  node [above=-2pt] {$\scs e$} (4); 
	\draw (4) .. controls (4.25/4,.25) and (4.75/4,.25) ..  node [above=-2pt] {$\scs b$} (5); 
	\draw (5) .. controls (5.75/4,.3) and (6.25/4,.3) ..  node [above=-2pt] {$\scs c$} (7);
\end{tikzpicture}+\kappa\hspace{-.1cm}
\begin{tikzpicture}[baseline=.2cm]
	\foreach \x in {1,...,7} 
		\node (\x) at (\x/4,0) [inner sep=0pt] {$\scs\bullet$};
	\foreach \x in {1,...,7} 
		\node at (\x/4,-.2) {$\scscs\x$};
	\draw (1) .. controls (1.75/4,.3) and (2.25/4,.3) ..  node [above=-2pt] {$\scs a$} (3); 
	\draw (2) .. controls (2.75/4,.3) and (3.25/4,.3) ..  node [above=-2pt] {$\scs d$} (4); 
	\draw (3) .. controls (4/4,.65) and (5/4,.65) ..  node [above=-2pt] {$\scs e$} (6); 
	\draw (4) .. controls (4.25/4,.25) and (4.75/4,.25) ..  node [above=-2pt] {$\scs b$} (5); 
	\draw (5) .. controls (5.75/4,.3) and (6.25/4,.3) ..  node [above=-2pt] {$\scs c$} (7);
\end{tikzpicture}\bigg).
\end{align*}
and 
\begin{align*}
\Delta\bigg(
\kappa\hspace{-.1cm}
\begin{tikzpicture}[baseline=.2cm]
	\foreach \x in {1,2,3,4} 
		\node (\x) at (\x/4,0) [inner sep=0pt] {$\scs\bullet$};
	\foreach \x in {1,2,3,4} 
		\node at (\x/4,-.2) {$\scscs\x$};
	\draw (1) .. controls (1.75/4,.25) and (3.25/4,.25) ..  node [above=-2pt] {$\scs a$} (4); 
\end{tikzpicture}\bigg)=&
\kappa\hspace{-.1cm}
\begin{tikzpicture}[baseline=.2cm]
	\foreach \x in {1,2,3,4} 
		\node (\x) at (\x/4,0) [inner sep=0pt] {$\scs\bullet$};
	\foreach \x in {1,2,3,4} 
		\node at (\x/4,-.2) {$\scscs\x$};
	\draw (1) .. controls (1.75/4,.25) and (3.25/4,.25) ..  node [above=-2pt] {$\scs a$} (4); 
\end{tikzpicture}\otimes \kappa_\emptyset
+
2\kappa\hspace{-.1cm}
\begin{tikzpicture}[baseline=.2cm]
	\foreach \x in {1,2,3} 
		\node (\x) at (\x/4,0) [inner sep=0pt] {$\scs\bullet$};
	\foreach \x in {1,2,3} 
		\node at (\x/4,-.2) {$\scscs\x$};
	\draw (1) .. controls (1.75/4,.25) and (2.25/4,.25) ..  node [above=-2pt] {$\scs a$} (3); 
\end{tikzpicture}
\otimes \kappa\hspace{-.1cm}
\begin{tikzpicture}[baseline=.1cm]
	\foreach \x in {1} 
		\node (\x) at (\x/4,0) [inner sep=0pt] {$\scs\bullet$};
	\foreach \x in {1} 
		\node at (\x/4,-.2) {$\scscs\x$};
\end{tikzpicture}+\kappa\hspace{-.1cm}
\begin{tikzpicture}[baseline=.2cm]
	\foreach \x in {1,2} 
		\node (\x) at (\x/4,0) [inner sep=0pt] {$\scs\bullet$};
	\foreach \x in {1,2} 
		\node at (\x/4,-.2) {$\scscs\x$};
	\draw (1) .. controls (1.25/4,.25) and (1.75/4,.25) ..  node [above=-2pt] {$\scs a$} (2); 
\end{tikzpicture}
\otimes \kappa\hspace{-.1cm}
\begin{tikzpicture}[baseline=.1cm]
	\foreach \x in {1,2} 
		\node (\x) at (\x/4,0) [inner sep=0pt] {$\scs\bullet$};
	\foreach \x in {1,2} 
		\node at (\x/4,-.2) {$\scscs\x$};
\end{tikzpicture}\\
&+  \kappa\hspace{-.1cm}
\begin{tikzpicture}[baseline=.1cm]
	\foreach \x in {1,2} 
		\node (\x) at (\x/4,0) [inner sep=0pt] {$\scs\bullet$};
	\foreach \x in {1,2} 
		\node at (\x/4,-.2) {$\scscs\x$};
\end{tikzpicture}\otimes \kappa\hspace{-.1cm}
\begin{tikzpicture}[baseline=.2cm]
	\foreach \x in {1,2} 
		\node (\x) at (\x/4,0) [inner sep=0pt] {$\scs\bullet$};
	\foreach \x in {1,2} 
		\node at (\x/4,-.2) {$\scscs\x$};
	\draw (1) .. controls (1.25/4,.25) and (1.75/4,.25) ..  node [above=-2pt] {$\scs a$} (2); 
\end{tikzpicture}+2 \kappa\hspace{-.1cm}
\begin{tikzpicture}[baseline=.1cm]
	\foreach \x in {1} 
		\node (\x) at (\x/4,0) [inner sep=0pt] {$\scs\bullet$};
	\foreach \x in {1} 
		\node at (\x/4,-.2) {$\scscs\x$};
\end{tikzpicture}\otimes\kappa\hspace{-.1cm}
\begin{tikzpicture}[baseline=.2cm]
	\foreach \x in {1,2,3} 
		\node (\x) at (\x/4,0) [inner sep=0pt] {$\scs\bullet$};
	\foreach \x in {1,2,3} 
		\node at (\x/4,-.2) {$\scscs\x$};
	\draw (1) .. controls (1.75/4,.25) and (2.25/4,.25) ..  node [above=-2pt] {$\scs a$} (3); 
\end{tikzpicture}+\kappa_\emptyset \otimes \kappa\hspace{-.1cm}
\begin{tikzpicture}[baseline=.2cm]
	\foreach \x in {1,2,3,4} 
		\node (\x) at (\x/4,0) [inner sep=0pt] {$\scs\bullet$};
	\foreach \x in {1,2,3,4} 
		\node at (\x/4,-.2) {$\scscs\x$};
	\draw (1) .. controls (1.75/4,.25) and (3.25/4,.25) ..  node [above=-2pt] {$\scs a$} (4); 
\end{tikzpicture}.
\end{align*}

\end{example}

By comparing Proposition \ref{CombinatorialSuperclassFunctions} to (\ref{mult_monomial}) and (\ref{MonomialCoproduct}), we
obtain the following theorem.

\begin{theorem}\label{q=2Correspondence}
For $q=2$, the map
$$\begin{array}{rccc} \ch: & \SC & \longrightarrow & \NCSym\\ 
& \kappa_\mu & \mapsto & \bm_\mu\end{array}$$
is a Hopf algebra isomorphism.
\end{theorem}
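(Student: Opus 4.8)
The plan is to verify that the linear bijection $\ch:\SC\to\NCSym$ determined by $\kappa_\mu\mapsto \bm_\mu$ (for $q=2$, where the $\cS_n(2)$ are literally set partitions of $[n]$, so the indexing sets match up perfectly) is simultaneously an algebra and a coalgebra map; since both $\SC$ and $\NCSym$ are graded connected bialgebras, this automatically makes it a Hopf algebra isomorphism (the antipode is forced by the grading, as recalled in Section~\ref{BackgroundHopfAlgebras}). Linearity and bijectivity are immediate: when $q=2$ we have $\FF_q^\times=\{1\}$, so an $\FF_q^\times$-set partition carries no extra scalar data and is just a set partition of $[n]$, hence $\{\kappa_\mu\}$ and $\{\bm_\mu\}$ are bases indexed by the same set. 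It remains only to match the structure constants.

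For the product, I would compare Proposition~\ref{CombinatorialSuperclassFunctions}(a) with the multiplication rule \eqref{mult_monomial} for $\bm_\lambda$. In Proposition~\ref{CombinatorialSuperclassFunctions}(a), for $\mu\in\cS_k(2)$ and $\nu\in\cS_{n-k}(2)$ the product $\kappa_\mu\cdot\kappa_\nu$ is a sum of $\kappa_\lambda$ over all $\lambda=\mu\sqcup\gamma\sqcup(k+\nu)$ where every arc of $\gamma$ goes from $\{1,\dots,k\}$ to $\{k+1,\dots,n\}$ and the whole thing still lies in $\cS_n(2)$ — i.e.\ still has at most one arc per row and column. One checks that this set of $\lambda$ is exactly the set of $\nu'\vdash[n]$ with $\nu'\wedge([k]\mid[n-k]) = \mu\mid\nu$: the condition that $\nu'\wedge([k]\mid[n-k])$ equals the concatenation $\mu\mid\nu$ says precisely that $\nu'$ restricted to $[k]$ is $\mu$ and to $\{k+1,\dots,n\}$ is $k+\nu$, and the additional ``cross'' edges one is allowed to add are exactly those that merge blocks of $\mu\mid\nu$; translating the ``arc'' encoding, having at most one arc per row/column is the linear-forest normal form for set partitions, so the two descriptions coincide. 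Hence $\ch(\kappa_\mu\cdot\kappa_\nu)=\bm_\mu\bm_\nu=\ch(\kappa_\mu)\ch(\kappa_\nu)$, and since $\kappa_\emptyset\mapsto\bm_\emptyset$ the unit is preserved.

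For the coproduct, I would compare Proposition~\ref{CombinatorialSuperclassFunctions}(b) with \eqref{MonomialCoproduct}. The supercharacter side writes $\Delta(\kappa_\lambda)=\sum \kappa_{\st_A(\mu)}\otimes\kappa_{\st_{A^c}(\nu)}$ over subsets $A\subseteq[n]$ such that $\lambda$ splits as $\mu\sqcup\nu$ with $\mu\in\cS_A(2)$, $\nu\in\cS_{A^c}(2)$ — that is, over subsets $A$ that are unions of blocks of $\lambda$ (no arc of $\lambda$ crosses between $A$ and $A^c$). Under the identification of $\lambda$ with its block structure, choosing such an $A$ is the same as choosing a subset $J$ of the blocks $[\ell(\lambda)]$ (namely the blocks contained in $A$), and $\st_A(\mu)$, $\st_{A^c}(\nu)$ become exactly $\st(\lambda_J)$, $\st(\lambda_{J^c})$. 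So the two sums agree term-by-term, giving $\ch(\Delta(\kappa_\lambda))=(\ch\otimes\ch)(\Delta(\ch(\kappa_\lambda)))$; the counit is the coefficient of the degree-$0$ part on both sides, so it is preserved too.

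The main obstacle — really the only nontrivial point — is the careful translation between the two encodings of a set partition in Proposition~\ref{CombinatorialSuperclassFunctions}(a): on the $\SC$ side a ``superclass type'' is a matrix with at most one nonzero entry per row and column (equivalently a union of ascending arcs forming a linear forest), whereas on the $\NCSym$ side one works directly with set partitions and the join operation $\wedge$. One must check that the set of matrices $\lambda$ appearing in $\kappa_\mu\cdot\kappa_\nu$ — obtained by inserting ``cross'' entries into the upper-right block subject to the row/column constraint — maps bijectively, under ``take the underlying set partition,'' onto $\{\nu'\vdash[n] : \nu'\wedge([k]\mid[n-k])=\mu\mid\nu\}$, with no multiplicities lost or gained. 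This is where the specialization $q=2$ is essential: for $q>2$ a single underlying set partition corresponds to many colorings, the map $\pi$ of \eqref{UnderlyingSetPartition} is not injective, and one needs the ``colored'' version of $\NCSym$ promised in Section~\ref{ColoredCorrespondence}; for $q=2$ the correspondence is exact and the comparison of structure constants is a direct combinatorial check.
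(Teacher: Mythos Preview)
Your proposal is correct and follows exactly the paper's approach: the paper states the theorem immediately after Proposition~\ref{CombinatorialSuperclassFunctions} with only the sentence ``By comparing Proposition~\ref{CombinatorialSuperclassFunctions} to (\ref{mult_monomial}) and (\ref{MonomialCoproduct}), we obtain the following theorem,'' and you have spelled out precisely that comparison. Your added detail --- identifying the arc sets $\lambda=\mu\sqcup\gamma\sqcup(k+\nu)$ with the set partitions $\nu'$ satisfying $\nu'\wedge([k]|[n-k])=\mu|\nu$ via partial matchings of blocks, and identifying admissible $A$'s in the coproduct with subsets $J$ of blocks --- is exactly the combinatorial check the paper leaves implicit.
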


Note that although we did not assume for  the theorem that $\SC$ is a Hopf algebra, the fact that  $\ch$ preserves the Hopf operations implies that $\SC$ for $q=2$ is indeed a Hopf algebra. The general result will follow from Section~\ref{ColoredCorrespondence}. 

\begin{corollary}\label{PiIsHopfAlgebra}
The algebra $\SC$ with product given by (\ref{InflationProduct}) and coproduct given by (\ref{RestrictionCoProduct}) is a Hopf algebra.
\end{corollary}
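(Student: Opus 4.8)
The plan is to derive Corollary~\ref{PiIsHopfAlgebra} from Theorem~\ref{q=2Correspondence} together with the general-$q$ result promised in Section~\ref{ColoredCorrespondence}, reducing the statement to the fact that $\NCSym$ (and its colored analogue) is already known to be a Hopf algebra. Concretely, once one knows that $\ch$ (or the colored version $\ch_q$ of Section~\ref{ColoredCorrespondence}) is a bijective linear map that intertwines the product~(\ref{InflationProduct}) with the $\NCSym$ product~(\ref{mult_monomial}) and the coproduct~(\ref{RestrictionCoProduct}) with the $\NCSym$ coproduct~(\ref{MonomialCoproduct}), all the Hopf axioms for $\SC$ (associativity, coassociativity, the bialgebra compatibility, existence of antipode) are transported verbatim across the isomorphism from the corresponding axioms for $\NCSym$. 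This is the observation already flagged in the remark after Theorem~\ref{q=2Correspondence}, so for $q=2$ the corollary is immediate, and for general $q$ it follows identically from the colored correspondence.

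First I would record that Proposition~\ref{CombinatorialSuperclassFunctions} gives closed combinatorial formulas for the product and coproduct of $\SC$ on the basis $\{\kappa_\mu\}$, and that these formulas match~(\ref{mult_monomial}) and~(\ref{MonomialCoproduct}) under $\kappa_\mu \mapsto \bm_\mu$ when $q=2$ (and under the colored analogue for general $q$). Next I would invoke the known Hopf structure on $\NCSym$ (cited from \cite[Section 6.2]{AM} and \cite[Theorem 4.1]{BRRZ}), respectively its colored version, to conclude that the transported operations on $\SC$ are associative, coassociative, and satisfy the bialgebra compatibility conditions listed in Section~\ref{BackgroundHopfAlgebras}. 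Since $\SC$ is graded by $n$, with $\SC_0 \cong \CC$ spanned by $\kappa_\emptyset = \chi^\emptyset$, it is a connected graded bialgebra, and hence, by the discussion surrounding~(\ref{AntipodeRecursion}), it automatically possesses an antipode. Therefore $\SC$ is a Hopf algebra.

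Alternatively — and this is the more self-contained route if one does not wish to lean on the colored correspondence — one can verify the bialgebra axioms for $\SC$ directly. Associativity of~(\ref{InflationProduct}) and coassociativity of~(\ref{RestrictionCoProduct}) follow from the transitivity of inflation along a tower $\UT_{(a,b,c)}(q) \subseteq \UT_{(a+b,c)}(q) \subseteq \UT_{a+b+c}(q)$ and the analogous transitivity of restriction along the chain of Young-type subgroups indexed by refinements of set compositions. The bialgebra compatibility $\Delta(\chi\cdot\psi) = \Delta(\chi)\,\Delta(\psi)$ is the assertion that inflation followed by restriction equals a sum of restrictions followed by inflations, reindexed by the ways a subset $A \subseteq [a+b]$ interacts with the block $[a]\mid[b]$; this is a Mackey-type identity for the idempotent projection $\tau$, and the graded structure forces it onto finitely many terms in each degree. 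The unit is $\kappa_\emptyset$ and the counit is the coefficient of $\kappa_\emptyset$, and their compatibilities are immediate from the definitions.

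The main obstacle, should one take the direct route, is the bialgebra compatibility: one must carefully check that inflating $\chi\times\psi$ into $\UT_{a+b}(q)$ and then restricting along $J=(A\mid A^c)$ produces precisely the terms appearing in $\Delta(\chi)\,\Delta(\psi)$, keeping track of how the set-composition straightening maps $\st_J$ interact with the homomorphism $\tau$. Working on the $\kappa$-basis via Proposition~\ref{CombinatorialSuperclassFunctions} makes this transparent: the product inserts arcs crossing the cut at position $k$, the coproduct splits the vertex set, and the two operations commute because an arc either lies wholly in one side of the vertex-split or crosses the product-cut, never both in a conflicting way. Once this combinatorial identity is in hand — which is exactly what the comparison with $\NCSym$ in Theorem~\ref{q=2Correspondence} already encodes — the corollary is formal.
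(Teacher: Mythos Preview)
Your proposal is correct and mirrors the paper's own argument: the corollary is deduced by transporting the Hopf structure of $\NCSym$ (and its colored analogue for general $q$) across the bijection $\ch$, exactly as the paper notes in the paragraph between Theorem~\ref{q=2Correspondence} and Corollary~\ref{PiIsHopfAlgebra}, with the general-$q$ case deferred to the colored correspondence of Section~\ref{ColoredCorrespondence}. Your alternative direct verification is extra and not in the paper, but it does no harm.
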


\begin{remarks}\hfill

\begin{enumerate}
\item[(a)] Note that the isomorphism of Theorem \ref{q=2Correspondence} is not in any way canonical.    In fact, the automorphism group of $\NCSym$ is rather large, so there are many possible isomorphisms.  For our chosen isomorphism, there is no nice interpretation for the image of the supercharacters under the  isomorphism of Theorem \ref{q=2Correspondence}.   Even less pleasant, when one composes it with the map
$$\NCSym \longrightarrow \Sym$$
that allows variables to commute (see \cite{Do,SR}), one in fact obtains that the supercharacters are not Schur positive. But, exploration with Sage suggests that it may be possible to choose an isomorphism such that the image of the supercharacters are Schur positive. 
\item[(b)] Although the antipode is determined by the bialgebra structure of $\NCSym$, explicit expressions are not well understood.   However, there are a number of forthcoming papers (e.g. \cite{ABT, LM}) addressing this situation.
\item[(c)]  In~\cite{ABS06}, the authors considered the category
of combinatorial Hopf algebras consisting of pairs $(H,\zeta)$,
where $H$ is a graded connected Hopf algebra and $\zeta\colon H\to
\CC$ is a character (an algebra homomorphism). As remarked in~\cite{BLL}, every graded Hopf algebra arising from representation theory  yields a canonical character. This is still true for $\SC$.  For all $n\ge 0$ consider the dual to the trivial supercharacter $(\chi^{\emptyset_n})^*\in \SC_n^*$. It follows from Section \ref{DualSCSection} below that
$$\Delta\big((\chi^{\emptyset_n})^*\big) = \sum_{k =0}^n(\chi^{\emptyset_k})^* \otimes
       (\chi^{\emptyset_{n-k}})^*,$$
which implies  that the map $\zeta:\SC\rightarrow \CC$ given by
$$\zeta(\chi)=\langle (\chi^{\emptyset_n})^*,\chi\rangle,\qquad \text{where $\chi\in \SC_n$,}$$
is a character. We thus have that
$(\SC,\zeta)$ is a combinatorial Hopf algebra in the sense of
\cite{ABS06}.  This connection awaits further exploration.
\end{enumerate}
\end{remarks}

The Hopf algebra $\SC$ has a number of natural Hopf subalgebras.  One of particular interest is the subspace spanned by linear characters (characters with degree 1).   In fact, for this supercharacter theory every linear character of $U_n$ is a supercharacter and by (\ref{SupercharacterDegree}) these are exactly indexed by the set
$$\mathcal{L}_n=\{\lambda\in \cS_n(q)\mid i\larc{a}j\in \lambda\text{ implies } j=i+1\}.$$

\begin{corollary}
For $q=2$, the Hopf subalgebra 
$${\mathrm{\mathbf{LSC}}}=\CC\spanning\{\chi^\lambda \mid  i\larc{1}j\in \lambda\text{ implies } j=i+1\},$$
is isomorphic to the Hopf algebra of noncommutative symmetric functions $\mathbf{Sym}$ studied in \cite{NCSF1}.
\end{corollary}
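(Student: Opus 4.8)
The plan is to identify the Hopf subalgebra $\mathrm{\mathbf{LSC}}$ with $\mathbf{Sym}$, the algebra of noncommutative symmetric functions, by exhibiting a basis of $\mathrm{\mathbf{LSC}}$ indexed by integer compositions and checking that the product and coproduct match the known formulas for $\mathbf{Sym}$ in one of its standard bases (e.g. the ribbon basis $R_\alpha$, or the complete basis $S^\alpha$). First I would set up the indexing: for $q=2$, an element $\lambda\in\mathcal{L}_n$ is a set of arcs $i\larc{1}(i+1)$, i.e. a subset of $\{1,2,\dots,n-1\}$ recording which ``consecutive'' arcs are present, with no constraint between distinct arcs since they automatically occupy distinct rows and columns. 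Such subsets of $\{1,\dots,n-1\}$ are in standard bijection with integer compositions $\alpha\models n$ (the subset is the set of partial sums $\{\alpha_1,\alpha_1+\alpha_2,\dots\}$, equivalently the complement records the block-break positions). Under the isomorphism $\ch$ of Theorem~\ref{q=2Correspondence}, $\kappa_\lambda\mapsto\bm_\lambda$, and the underlying set partition of such a $\lambda$ is exactly the set partition of $[n]$ into consecutive intervals of sizes $\alpha_1,\alpha_2,\dots$; so $\mathrm{\mathbf{LSC}}$ maps isomorphically onto the span of the $\bm_\lambda$ over ``interval'' set partitions, and it suffices to show this span is a Hopf subalgebra of $\NCSym$ isomorphic to $\mathbf{Sym}$.

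Next I would verify closure and compute the structure constants. For the product: by Proposition~\ref{CombinatorialSuperclassFunctions}(a) with $q=2$, $\kappa_\mu\cdot\kappa_\nu = \sum_{\gamma}\kappa_{\mu\sqcup\gamma\sqcup(k+\nu)}$ where $\gamma$ ranges over sets of arcs $i\larc{1}l$ with $i\le k<l$ and the whole thing still lies in $\cS_n(2)$. When $\mu,\nu$ are linear (only consecutive arcs), the only ``bridging'' arc that keeps everything linear and keeps at most one arc per row/column is the single arc $k\larc{1}(k+1)$, provided row $k$ is free in $\mu$ and column $k+1$ is free in $\nu$; otherwise only the empty $\gamma$ contributes. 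This yields exactly two terms (or one) and reproduces the defining recursion/product rule for $\mathbf{Sym}$ in, say, the $R$-basis or equivalently shows $S^{(a)}\cdot S^{(b)}$-type behavior on the generators $\kappa_{\emptyset_n}$ — concretely, one checks $\mathrm{\mathbf{LSC}}$ is generated as an algebra by the elements $h_n := \kappa_{\emptyset_n}$ (the class function supported on the identity superclass of $\UT_n$), with no relations, so it is free on one generator in each degree, matching the complete homogeneous generators of $\mathbf{Sym}$. For the coproduct: by Proposition~\ref{CombinatorialSuperclassFunctions}(b), $\Delta(\kappa_\lambda)$ splits $[n]=A\sqcup A^c$ so that every arc stays within $A$ or within $A^c$; restricting to linear $\lambda$ and using that $\st_A,\st_{A^c}$ preserve the linear property, one sees the resulting tensor factors are again in $\mathrm{\mathbf{LSC}}$, and the coproduct of $h_n$ is $\sum_{k=0}^n h_k\otimes h_{n-k}$ — precisely the coproduct on the complete homogeneous functions in $\mathbf{Sym}$.

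Then I would invoke the universal property: $\mathbf{Sym}$ is the free associative algebra on generators $S_1,S_2,\dots$ with $\Delta(S_n)=\sum_{k=0}^n S_k\otimes S_{n-k}$ (this is one of the standard presentations, see \cite{NCSF1}). Since $\mathrm{\mathbf{LSC}}$ is graded connected, generated freely by the $h_n$ in degree $n$ (freeness follows from dimension count: $\dim\mathrm{\mathbf{LSC}}_n = 2^{n-1} = $ number of compositions of $n = \dim\mathbf{Sym}_n$, and the products of $h_n$'s in a fixed order over compositions of $n$ are linearly independent because their leading terms under $\ch$ are distinct $\bm_\lambda$'s), the assignment $h_n\mapsto S_n$ extends to a graded algebra isomorphism, and by the matching coproduct formula on generators it is a bialgebra, hence Hopf algebra, isomorphism. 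The main obstacle I anticipate is the bookkeeping in the product step: one must be careful that when $\mu$ has an arc ending at $k$ or $\nu$ has an arc starting at $k+1$ (in local coordinates $1$), the bridging arc $k\larc{1}(k+1)$ is forbidden by the row/column condition, so that the product really does only ever have one or two terms and agrees with the concatenation-plus-overlap rule defining multiplication of complete functions in $\mathbf{Sym}$; getting this case analysis exactly right, and confirming it is the $S$-basis (not, say, an off-by-one ribbon indexing), is where the care is needed. Everything else is a routine transport of Proposition~\ref{CombinatorialSuperclassFunctions} through $\ch$ and a comparison with the presentation of $\mathbf{Sym}$.
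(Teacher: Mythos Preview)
Your proposal has a genuine gap: the elements $h_n := \kappa_{\emptyset_n}$ do \emph{not} lie in $\mathrm{\mathbf{LSC}}$ for $n\ge 3$. The space $\mathrm{\mathbf{LSC}}$ is by definition the span of the supercharacters $\chi^\lambda$ with $\lambda\in\mathcal{L}_n$, not the span of the superclass functions $\kappa_\mu$ with $\mu\in\mathcal{L}_n$, and these two subspaces of $\SC_n$ are different. Since the supercharacters form an orthogonal basis of $\SC_n$, a class function $f$ lies in $\mathrm{\mathbf{LSC}}$ exactly when $\langle \chi^\lambda, f\rangle = 0$ for every $\lambda\notin\mathcal{L}_n$; but $\langle \chi^\lambda, \kappa_{\emptyset_n}\rangle$ is a nonzero multiple of the degree $\chi^\lambda(1)$, which never vanishes. (Already for $n=3$ one checks from the character formula that every linear supercharacter takes the value $1$ both on the identity superclass and on the superclass $1\larc{1}3$, so no linear combination of them can be supported on the identity alone.) Consequently your candidate generators are not available, and the image $\ch(\mathrm{\mathbf{LSC}})$ is \emph{not} the span of the $\bm_\lambda$ over interval partitions.

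There is a second, independent error: even ignoring membership, the coproduct you assert is wrong. By Proposition~\ref{CombinatorialSuperclassFunctions}(b), for $\lambda=\emptyset_n$ every subset $A\subseteq [n]$ contributes, so
\[
\Delta(\kappa_{\emptyset_n}) \;=\; \sum_{k=0}^n \binom{n}{k}\, \kappa_{\emptyset_k}\otimes \kappa_{\emptyset_{n-k}},
\]
with binomial coefficients; these are divided-power elements, not complete-homogeneous ones.

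The paper repairs both issues by choosing the \emph{opposite} extreme in $\mathcal{L}_n$: it takes as generators $\kappa_{[n]}$, where $[n]=\{i\larc{1}(i+1)\mid 1\le i<n\}$ has \emph{all} consecutive arcs. The supercharacter formula gives $\chi^\lambda(u_{[n]})=0$ whenever $\lambda$ contains a non-consecutive arc, so $\kappa_{[n]}\in\mathrm{\mathbf{LSC}}$; and since each arc $i\larc{1}(i+1)$ forces $i$ and $i+1$ into the same tensor factor in Proposition~\ref{CombinatorialSuperclassFunctions}(b), the element $\kappa_{[n]}$ is \emph{primitive}. Freeness is then argued by an upper-triangularity of the products $\kappa_{[k_1]}\cdots\kappa_{[k_\ell]}$ together with the dimension count $|\mathcal{L}_n|=2^{n-1}$, and the isomorphism with $\mathbf{Sym}$ is $\kappa_{[n]}\mapsto \Psi_n$ (the primitive power-sum generators), not $S_n$.
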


\begin{proof} Let the \emph{length} of an arc $i\larc{a}j$ be $j-i$.  By inspection of the product and coproduct of $\SC$, we observe that an arc $i\larc{a}j$ never increases in length. Since ${\mathrm{\mathbf{LSC}}}$
is the linear span of supercharacters indexed by set partitions with arcs of length at most $1$, it is clearly a Hopf subalgebra.

By (\ref{SupercharacterFormula}), for $[n]=\{i\larc{1}(i+1) \mid 1\le i<n\}$, we have $\langle \chi^\lambda,\kappa_{[n]}\rangle=0$ unless $\lambda\in \cL_n$.   Thus, the superclass functions $\kappa_{[n]}\in{\mathrm{\mathbf{LSC}}}$.  Furthermore, if we order by refinement in $\SC_n$, then the set of products 
$$\{\kappa_{[k_1]}\kappa_{[k_2]}\cdots \kappa_{[k_\ell]}\mid k_1+k_2+\cdots+k_\ell=n, \ell\geq 1\}$$
have an upper-triangular decomoposition in terms of the $\kappa_\mu$.  Therefore, the elements $\kappa_{[n]}$ are algebraically independent in ${\mathrm{\mathbf{LSC}}}$, and $\mathrm{\mathbf{LSC}}$ contains the free algebra
$$ \CC\langle \kappa_{[1]},\kappa_{[2]},\ldots\rangle.$$
Note that every element  $\lambda\in \cL_n$ is of the form 
$$\lambda=[1\larc{\larc{}}k_1]\cup \big[ (k_1+1)\larc{\larc{}} (k_1+k_2)\big]\cup\ldots\cup \big[ (n-k_\ell)\larc{\larc{}} n\big],$$ 
where $[i\larc{\larc{}} j]= \{i\larc{1}(i+1),(i+1)\larc{1}(i+2),\ldots,(j-1)\larc{1}j\}.$
Thus,
$$|\cL_n|=\dim\big(\CC\spanning\{\kappa_{[k_1]}\kappa_{[k_2]}\cdots \kappa_{[k_\ell]}\mid k_1+k_2+\cdots+k_\ell=n, \ell\geq 1\}\big),$$ 
implies
$${\mathrm{\mathbf{LSC}}}=\CC\langle \kappa_{[1]},\kappa_{[2]},\ldots\rangle.$$
  
On the other hand,  \cite{NCSF1} describes $\mathbf{Sym}$  as follows:
  $$\mathbf{Sym} = \CC\langle \Psi_1,\Psi_2,\ldots\rangle$$
is the free (non-commutative) algebra with $\deg(\Psi_k)=k$ and coproduct given by
  $$\Delta(\Psi_k) = 1\otimes \Psi_k + \Psi_k\otimes 1.$$
Hence, the map $\kappa_{[k]}\mapsto\Psi_k$ gives the desired isomorphism.
\end{proof}

\begin{remark}
  In fact, for each $k\in \ZZ_{\geq 0}$ the space
$$\SC^{(k)}=\CC\spanning\{\chi^\lambda\mid i\larc{}j\in \lambda\text{ implies } j-i\leq k\}$$
is a Hopf subalgebra. This gives an unexplored filtration of Hopf algebras which interpolate between $\mathrm{\mathbf{LSC}}$ and $\SC$.

\end{remark}

\subsection{A colored version of $\WSym$} \label{ColoredCorrespondence}

There are several natural ways to color a combinatorial Hopf algebra; for example see~\cite{BH}. The Hopf algebra $\SC$ for general $q$ is a Hopf subalgebra of the ``naive" coloring of $\NCSym$.

Let $C_r=\langle\zeta\rangle$ be a cyclic group of order $r$ (which in our case will eventually be $r=q-1$).  We expand our set of variables $A=\{a_1,a_2,\ldots\}$  by letting
$$A^{(r)}=A\times C_r.$$
We view the elements of $C_r$ as colors that decorate the
variables of $A$.  The group $\SG_A$ acts on the first coordinate of the set $A^{(r)}$. That is, $\sigma(a_i,\zeta^j)=(\sigma(a_i),\zeta^j)$. With this action, we define 
$\tilde\NCSym^{(r)}$ as the set of bounded formal power series in $A^r$ invariant under the action of $\SG_A$. As before, we assume that $A$ is infinite and the space $\tilde\NCSym^{(r)}$ is a graded algebra based on $r$-colored set partitions $(\mu,(\zeta_1,\ldots, \zeta_n))$ where $\mu$ is a set partition of the set $\{1,2,\ldots,n\}$ and  $(\zeta_1,\ldots, \zeta_n)\in C_r^n$. It has a basis of monomial elements given by
\begin{equation}
\bm_{\mu,(\zeta_1,\ldots, \zeta_n)}\left(A^{(r)}\right)=\sum_{w\in O_{\mu,(\zeta_1,\ldots, \zeta_n)}}w,
\end{equation}
where $O_{\mu,(\zeta_1,\ldots, \zeta_n)}$ is the orbit of $\SG_A$ indexed by $(\mu,(\zeta_1,\ldots, \zeta_n))$. More precisely, it is the set of words 
$w=(a_{i_1},\zeta_1)(a_{i_2},\zeta_2)\ldots (a_{i_n},\zeta_n)$ on the alphabet $A^{(r)}$ such that $a_i=a_j$ if and only if $i$ and $j$ are
in the same block of $\mu$. The concatenation product on $\KK\langle A^{(r)}\rangle$ gives us the following combinatorial description of the product  in $\tilde\NCSym^{(r)}$ in the monomial basis. If $\lambda\vdash [k]$ and $\mu\vdash[n-k]$, then
\begin{equation}
\bm_{\lambda,(\zeta_1,\ldots, \zeta_k)} \bm_{\mu,(\zeta'_1,\ldots, \zeta'_{n-k})}=\sum_{\nu\vdash [n]\atop \nu\wedge ([k]|[n-k])=\lambda|\mu}
     \bm_{\nu,(\zeta_1,\ldots, \zeta_k,\zeta'_1,\ldots, \zeta'_{n-k})}\,.
\end{equation}
This is just a colored version of (\ref{mult_monomial}).

As before, we define a coproduct by
\begin{equation}
\Delta(f) =  \sum_k f_k'\otimes f_k''    \qquad \text{where}\qquad           f\left(A'^{(r)}+A''^{(r)}\right)   = \sum_k f_k'\left(A'^{(r)}\right)f_k''\left(A''^{(r)}\right)
\end{equation}
and $A'^{(r)}+A''^{(r)}$ denotes the disjoint union of two mutually commuting copies of $A^{(r)}$. This is clearly
coassociative and a morphism of algebras;  hence, $\NCSym^r$ is a graded Hopf algebra.
The coproduct of a monomial function is 
\begin{equation}
\Delta \bm_{\lambda,(\zeta_1,\ldots, \zeta_{n})} = \sum_{\mu\vee\nu=\lambda}\bm_{\st(\mu),\zeta\big|_{\mu}}\otimes \bm_{\st(\nu),\zeta\big|_{\nu}},
\end{equation}
where $\zeta\big|_{\mu}$ denotes the subsequence $(\zeta_{i_1},\zeta_{i_2},...)$ with $i_1<i_2<\cdots$ and $i_j$ appearing in a block of $\mu$.
The complement sequence is $\zeta\big|_{\nu}$.
This coproduct is  cocommutative. With the unit $u\colon 1\mapsto 1$ and the counit $\epsilon\colon f(A^{(r)})\mapsto f(0,0,\ldots)$ we have that $\tilde\NCSym^{(r)}$ is a connected graded bialgebra and therefore a graded Hopf algebra.

Now we describe a Hopf subalgebra of this space indexed by $ \cS_n(q)$ for $n\ge 0$.
For $(D,\phi)\in \cS_n(q)$, let
$$\bk_{(D,\phi)}= \sum_{(\zeta_1,\ldots,\zeta_n)\in C_r^{n}\atop \zeta_{j}/\zeta_i=\phi(i,j)} \bm_{\pi(D,\phi),(\zeta_1,\ldots,\zeta_n)}\,,$$
where $\pi(D,\phi)$ is the underlying set partition of $D$ (as in (\ref{UnderlyingSetPartition})).

\begin{proposition}\label{prop:NCSym_q}
The space
$$\NCSym^{(q-1)}=\CC\spanning\{\bk_{(D,\phi)}\mid (D,\phi)\in \cM_n(q),n\in\ZZ_{\geq 0}\}$$
is a Hopf subalgebra of $\tilde\NCSym^{(q-1)}$.   For  $\mu\in \cS_k(q)$, $\nu\in \cS_{n-k}(q)$ the product is given by
\begin{equation}\label{kmult}
\bk_\mu\cdot \bk_\nu=\sum_{\lambda=\mu\sqcup \gamma\sqcup (k+\nu)\in \cS_n(q)\atop   i\slarc{\scscs a}l\in \gamma \text{ implies } i\leq k < l} \bk_\lambda,
\end{equation}
 and for $\lambda\in \cS_n(q)$, the coproduct is given by
\begin{equation}\label{kcomult}
\Delta(\bk_\lambda)=\sum_{{\lambda=\mu\sqcup \nu\atop \mu\in \cS_A(q), \nu\in \cS_{A^c}(q)}\atop A\subseteq \{1,2,\ldots,n\}} \bk_{\mathrm{st}_A(\mu)}\otimes \bk_{\mathrm{st}_{A^c}(\nu)}.
\end{equation}
\end{proposition}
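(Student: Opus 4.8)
The plan is to compute the product and coproduct of the spanning elements $\bk_{(D,\phi)}$ directly inside $\tilde\NCSym^{(q-1)}$, using the combinatorial formulas for its product and coproduct in the monomial basis, and to recognize the outputs as the right-hand sides of (\ref{kmult}) and (\ref{kcomult}). Closure under these two operations shows that $\NCSym^{(q-1)}$ is a graded sub-bialgebra; since it is connected ($\bk_\emptyset=\bm_\emptyset=1$, and the counit annihilates every $\bk_{(D,\phi)}$ with $n\ge 1$) and $\tilde\NCSym^{(q-1)}$ is a graded connected bialgebra whose antipode is the canonical one built from the product and coproduct by (\ref{AntipodeRecursion}), the sub-bialgebra is automatically closed under the antipode, hence is a Hopf subalgebra. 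Throughout I identify $C_{q-1}$ with $\FF_q^\times$, so that the colors decorating the variables are elements of $\FF_q^\times$. It is worth recording first what $\bk_{(D,\phi)}$ looks like: the constraint that $D$ have at most one entry per row and column means that $D$, viewed as a graph on $[n]$, is a disjoint union of increasing paths whose vertex sets are exactly the blocks of $\pi(D,\phi)$, and along each such path the equations $\zeta_j/\zeta_i=\phi(i,j)$ propagate the colors with no cycle to obstruct consistency, so they leave exactly one free color per block. Thus $\bk_{(D,\phi)}$ is a sum of $(q-1)^{b}$ distinct monomial elements $\bm_{\pi(D,\phi),\zeta}$, where $b$ is the number of blocks, and distinct pairs $(D,\phi)$ yield disjoint collections of such $\bm$'s (either the underlying partition differs, or $D$ is the same and the two ratio conditions on colorings are incompatible); hence the $\bk_{(D,\phi)}$ form a basis of $\NCSym^{(q-1)}$ and it suffices to verify (\ref{kmult}) and (\ref{kcomult}) as identities in $\tilde\NCSym^{(q-1)}$.

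For the coproduct I would expand $\Delta(\bk_\lambda)=\sum_{\eta}\Delta(\bm_{\pi(\lambda),\eta})$, the sum over colorings $\eta$ compatible with $\lambda$, and apply the colored version of (\ref{MonomialCoproduct}). A choice of a set of blocks of $\pi(\lambda)$ is the same thing as a subset $A\subseteq[n]$ that is a union of blocks, equivalently a decomposition $\lambda=\mu\sqcup\nu$ with $\mu\in\cS_A(q)$ and $\nu\in\cS_{A^c}(q)$ --- precisely because no arc of $\lambda$ may cross between $A$ and $A^c$. The one thing to observe is that, for such an $A$, a coloring $\eta$ is compatible with $\lambda$ if and only if its restriction to $A$ is compatible with $\mu$ and its restriction to $A^c$ is compatible with $\nu$: this is immediate since each arc of $\lambda$ lies entirely inside $A$ or inside $A^c$, and the order-preserving relabelings $\st_A,\st_{A^c}$ preserve compatibility. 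Re-collecting the monomials on each tensor factor into $\bk$'s then gives exactly $\Delta(\bk_\lambda)=\sum_{A}\bk_{\st_A(\mu)}\otimes\bk_{\st_{A^c}(\nu)}$, which is (\ref{kcomult}).

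The product is the substantive case, and I expect the bookkeeping there to be the main obstacle. Expanding, $\bk_\mu\cdot\bk_\nu=\sum_{\zeta,\zeta'}\bm_{\pi(\mu),\zeta}\cdot\bm_{\pi(\nu),\zeta'}$, and the colored version of (\ref{mult_monomial}) rewrites each term as $\sum_{\rho}\bm_{\rho,(\zeta,\zeta')}$ over $\rho\vdash[n]$ with $\rho\wedge([k]\mid[n-k])=\pi(\mu)\mid\pi(\nu)$. Such $\rho$ are exactly those obtained from $\pi(\mu)\mid\pi(\nu)$ by picking a partial matching between the blocks lying in $[k]$ and those lying in $\{k+1,\dots,n\}$ and merging each matched pair. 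On the other side, the set partitions $\mu\sqcup\gamma\sqcup(k+\nu)$ occurring in (\ref{kmult}) have precisely these underlying set partitions, because a connecting arc of $\gamma$ must run from the largest vertex of a block of $\mu$ to the smallest vertex of a block of $k+\nu$; so $\gamma$ is the same data as such a partial matching $M$ together with a color in $\FF_q^\times$ on each matched pair. Grouping both sides according to $\rho$ (equivalently to $M$): on the left, the monomials $\bm_{\rho,\eta}$ that appear are exactly those whose coloring $\eta$ restricts to a $\mu$-compatible coloring on $[k]$ and a $\nu$-compatible coloring on $\{k+1,\dots,n\}$, each with multiplicity one; on the right, $\sum_{c}\bk_{\mu\sqcup\gamma\sqcup(k+\nu)}$ (the sum over colorings $c$ of the matched pairs of $M$) contributes $\bm_{\rho,\eta}$ once for each $\eta$ compatible with $\mu\sqcup\gamma\sqcup(k+\nu)$, and for a given $\eta$ that is already compatible on the $\mu$- and $\nu$-parts there is exactly one $c$ making it fully compatible --- on the arc $i\slarc{c}l$ one is forced to take $c=\eta_l/\eta_i$, and every element of $\FF_q^\times$ arises this way since $\FF_q^\times$ is a group. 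Hence the two sides agree term by term, giving (\ref{kmult}). This last point is the crux: the ``one free color per block'' on the left must be matched, on the right, by the trade-off between the colors chosen on the connecting arcs (each of which merges two blocks) and the free colors remaining on $\mu\sqcup\gamma\sqcup(k+\nu)$; the numerical identity $(q-1)^{b_\mu+b_\nu}=(q-1)^{|M|}(q-1)^{b_\mu+b_\nu-|M|}$ is trivial, but one must verify that the underlying multisets of colorings actually coincide, and that is exactly where the group structure of $\FF_q^\times$ enters.
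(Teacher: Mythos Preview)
Your proof is correct and follows essentially the same approach as the paper's: expand each $\bk$ into $\bm$'s, apply the known product/coproduct formulas in $\tilde\NCSym^{(q-1)}$, and regroup. You supply more detail than the paper does—in particular, the explicit identification of the $\rho$'s with partial matchings of blocks, the observation that a connecting arc must run from a block's top element to another block's bottom element, the ``one forced color per connecting arc'' argument, and the justification that a connected graded sub-bialgebra is automatically a Hopf subalgebra—but the underlying strategy is identical.
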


\begin{proof} It is sufficient to show that $\NCSym^{(q-1)}$ is closed under product and coproduct. Thus, it is enough to show that (\ref{kmult}) and (\ref{kcomult}) are valid. For  $\mu=(D,\phi)\in \cS_k(q)$, $\nu=(D',\phi')\in \cS_{n-k}(q)$ let $\pi(\mu)$ and $\pi(\nu)$ be the underlying set partitions of $\mu$ and $\nu$, respectively. We have 
\begin{align*}
\bk_\mu\cdot \bk_\nu&= \bigg( \sum_{(\zeta_1,\ldots,\zeta_k)\in C_r^{k}\atop \zeta_{j}/\zeta_i=\phi(i,j)} \bm_{\pi(\mu),(\zeta_1,\ldots,\zeta_k)} \bigg)
                                           \bigg( \sum_{(\zeta_{k+1},\ldots,\zeta_n)\in C_r^{n-k}\atop \zeta_{k+j}/\zeta_{k+i}=\phi'(i,j)} \bm_{\pi(\nu),(\zeta_{k+1},\ldots,\zeta_n)} \bigg)\\
                                     &=  \sum_{(\zeta_1,\ldots,\zeta_n)\in C_r^{n}\atop { \zeta_{j}/\zeta_i=\phi(i,j),\, i<j\le k \atop  \zeta_{j}/\zeta_{i}=\phi'(i-k,j-k), k<i<j}} 
                                            \sum_{\rho\vdash [n]\atop \rho\wedge ([k]|[n-k])=\pi(\mu)|\pi(\nu)}    \bm_{\rho,(\zeta_1,\ldots,  \zeta_{n})}\\ 
                                     &= \sum_{\lambda=\mu\sqcup \gamma\sqcup (k+\nu)\in \cS_n(q)\atop   i\slarc{\scscs a}l\in \gamma \text{ implies } i\leq k < l} \bk_\lambda.
\end{align*}
In the second equality, the second sum ranges over set partitions $\rho$ obtained by grouping some block of $\pi(\mu)$ with some block of $\pi(\nu)$.
These set partitions can be thought of as collections of arcs $i\larc{}j$ with $1\le i\le k<j\le n$.
In the last equality, we group together the terms $\bm_{\rho,(\zeta_1,\ldots,  \zeta_{n})}$ such that $ \zeta_{j}/\zeta_i=\phi''(i,j)$ for $i\le k<j$.

Now for $\lambda=(D,\phi)\in \cS_n(q)$,
\begin{align*}
\Delta(\bk_\lambda)  &=\Delta \bigg( \sum_{(\zeta_1,\ldots,\zeta_n)\in C_r^{n}\atop \zeta_{j}/\zeta_i=\phi(i,j)} \bm_{\pi(\lambda),(\zeta_1,\ldots,\zeta_n)} \bigg)\\
                                     &=  \sum_{(\zeta_1,\ldots,\zeta_n)\in C_r^{n}\atop \zeta_{j}/\zeta_i=\phi(i,j)} 
                                           \sum_{\mu\vee\nu=\lambda}\bm_{\st(\mu),\zeta\big|_{\mu}}\otimes \bm_{\st(\nu),\zeta\big|_{\nu}}\\ 
                                     &=\sum_{{\lambda=\mu\sqcup \nu\atop \mu\in \cS_A(q), \nu\in \cS_{A^c}(q)}\atop A\subseteq \{1,2,\ldots,n\}}
                                             \bk_{\mathrm{st}_A(\mu)}\otimes \bk_{\mathrm{st}_{A^c}(\nu)}.
\end{align*}
\end{proof}

Comparing Proposition~\ref{prop:NCSym_q} and Proposition~\ref{SuperclassFunctionsOperations}, we obtain

\begin{theorem}
The map
$$\begin{array}{rccc} \ch: & \SC & \longrightarrow & \WSym^{(q-1)}\\ &\kappa_\mu & \mapsto & \bk_{(D_\mu,\phi_\mu)}\end{array}$$
is an isomorphism of Hopf algebras. In particular, $\SC$ is a Hopf algebra for any $q$.
\end{theorem}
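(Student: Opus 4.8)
\textit{Proof proposal.} The plan is to recognize that the map $\ch$ of the statement is a degree-preserving linear bijection and that the combinatorial formulas already established make it commute with every structure map; the Hopf-algebra axioms then transfer from $\WSym^{(q-1)}$, which is already known to be a Hopf algebra, back to $\SC$.

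First I would check that $\ch$ is a well-defined graded linear isomorphism. Under the identifications of (\ref{CombinatorialIndexSets}) the set $\{\kappa_\mu\mid\mu\in\cS_n(q)\}$ is a basis of $\SC_n$, and the bijection $\cS_n(q)\leftrightarrow\cM_n(q)$ lets us write each index as a pair $(D,\phi)$. Since $\FF_q^\times$ is cyclic of order $q-1$ we identify it with $C_{q-1}$, which is what makes the defining condition $\zeta_j/\zeta_i=\phi(i,j)$ of $\bk_{(D,\phi)}$ meaningful. The elements $\bk_{(D,\phi)}$ are linearly independent: in the monomial basis $\{\bm_{\mu,\zeta}\}$ of $\tilde\NCSym^{(q-1)}$, two distinct $\bk_{(D,\phi)}$ and $\bk_{(D',\phi')}$ have disjoint supports, because if $D\ne D'$ then (using that $\cM_n(2)$ is in bijection with set partitions) the underlying set partitions differ, while if $D=D'$ and $\phi\ne\phi'$ the color constraints are incompatible. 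Hence $\{\bk_{(D,\phi)}\mid(D,\phi)\in\cM_n(q)\}$ is a basis of the degree-$n$ part of $\WSym^{(q-1)}$, and $\ch$ is a graded linear isomorphism.

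Next I would match the structure maps term by term. The product formula (\ref{kmult}) of Proposition~\ref{prop:NCSym_q} is the same sum as the product formula of Proposition~\ref{CombinatorialSuperclassFunctions}(a), and the coproduct formula (\ref{kcomult}) is the same sum as in Proposition~\ref{CombinatorialSuperclassFunctions}(b); moreover $\ch(\kappa_\emptyset)=\bk_\emptyset$ is the unit of $\WSym^{(q-1)}$, and the counit of $\WSym^{(q-1)}$ (the restriction of $f\mapsto f(0,0,\ldots)$) sends $\bk_\lambda$ to $1$ when $\lambda=\emptyset$ and to $0$ otherwise, matching the counit on $\SC$. Therefore $\ch$ commutes with product, coproduct, unit, and counit.

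Finally I would conclude by transport of structure. By Proposition~\ref{prop:NCSym_q} the space $\WSym^{(q-1)}$ is a Hopf subalgebra of $\tilde\NCSym^{(q-1)}$, and in Section~\ref{ColoredCorrespondence} the latter was shown to be a connected graded bialgebra; hence so is $\WSym^{(q-1)}$. Since $\ch$ is a graded linear isomorphism intertwining the product (\ref{InflationProduct}), the coproduct (\ref{RestrictionCoProduct}), the unit, and the counit of $\SC$ with those of $\WSym^{(q-1)}$, the associativity, coassociativity, and compatibility identities valid in $\WSym^{(q-1)}$ pull back to $\SC$. Thus $\SC$ is a connected graded bialgebra, and by the fact recalled in Section~\ref{BackgroundHopfAlgebras} it has a unique antipode, so it is a Hopf algebra and $\ch$ is an isomorphism of Hopf algebras; in particular this holds for every $q$. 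I do not expect a genuine obstacle at this stage: all of the real work is contained in Propositions~\ref{CombinatorialSuperclassFunctions} and~\ref{prop:NCSym_q}, and the only point requiring care is the bookkeeping of the third paragraph --- confirming that the two combinatorial descriptions use literally the same index set and the same summation ranges.
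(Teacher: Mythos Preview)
Your proposal is correct and follows exactly the paper's approach: the paper's entire proof is the sentence ``Comparing Proposition~\ref{prop:NCSym_q} and Proposition~\ref{SuperclassFunctionsOperations}, we obtain'' the theorem, and you have simply unpacked that comparison --- verifying that $\ch$ is a graded linear bijection, that the product and coproduct formulas in the two propositions are literally identical, and that the remaining Hopf structure then transfers. Your write-up is more explicit than the paper's (in particular about linear independence of the $\bk_{(D,\phi)}$ and about the unit/counit), but there is no substantive difference in method.
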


\begin{remark}
  As in the $q=2$ case, for each $k\in \ZZ_{\geq 0}$ the space
$$\SC^{(k)}=\CC\spanning\{\chi^\lambda\mid i\larc{a}j\in \lambda\text{ implies } j-i\leq k\}$$
is a Hopf subalgebra of $\SC$.  For $k=1$, this gives a $q$-version of the Hopf algebra of noncommutative symmetric functions.
\end{remark}

\section{The dual Hopf algebras $\SC^*$ and $\NCSym^*$}

This section explores the dual Hopf algebras $\SC^*$ and $\NCSym^*$. We begin by providing representation theoretic interpretations of the product and coproduct of $\SC^*$, followed by a concrete realization of $\SC^*$ and $\NCSym^*$.

\subsection{The Hopf algebra $\SC^*$}\label{DualSCSection}

As a graded vector space, $\SC^*$ is the vector space dual to $\SC$:
\begin{align*}
\SC^* &= \bigoplus_{n\geq 0} \SC_n^*\\ 
&=\CC\spanning\{\kappa_\mu^* \mid \mu \in \cS_n(q),n\in \ZZ_{\geq 0}\}\\
&=\CC\spanning\{(\chi^\lambda)^* \mid \lambda \in \cS_n(q),n\in \ZZ_{\geq 0}\}.
\end{align*}
We may use the inner product (\ref{InnerProductSupercharacters}) to identify $\SC^*$ with $\SC$ as graded vector spaces.
Under this identification, the basis element dual to $\kappa_\mu$ with respect to the inner product (\ref{InnerProductSupercharacters}) is
\begin{align*}
\kappa_\mu^* &= z_\mu\kappa_\mu, && \text{where } z_\mu=\frac{|\UT_{|\mu|}(q)|}{|\UT_{|\mu|}(q)(u_\mu-1)\UT_{|\mu|}(q)|},
\end{align*}
and the basis element dual to $\chi^\lambda$ is
\begin{align*}
(\chi^\lambda)^*&= q^{-C(\lambda)}\chi^\lambda, && \text{where } C(\lambda)=\#\{(i,j,k,l)\mid i\larc{a}k,j\larc{b}l\in \lambda\}.
\end{align*}
The product on $\SC^*$ is given by
$$\chi\cdot \psi = \sum_{{J=A|A^c\atop A\subseteq [a+b]}\atop |A|=a} {}^J\SInd_{\UT_{a}(q)\times \UT_{b}(q)}^{\UT_{a+b}(q)}(\chi \times \psi),\qquad \text{for $\chi\in \SC_a^*,\psi\in \SC_{b}^*$},$$
and the coproduct by
$$\Delta(\chi)=\sum_{k=0}^n \Def_{\UT_{(k,n-k)}(q)}^{\UT_n(q)}(\chi), \qquad \text{for $\chi\in\SC_n^*$}.$$  

\begin{proposition} 
The product and coproduct of $\SC^*$ in the $\kappa^*$ basis is given by
\begin{enumerate}
\item[(a)] for $\mu\in \cS_k(q)$ and $\nu\in\cS_{n-k}(q)$,
$$\kappa_\mu^*\cdot \kappa_\nu^*=\sum_{J\subseteq [n]\atop |J|=k} \kappa_{\mathrm{st}^{-1}_J(\mu)\cup\mathrm{st}^{-1}_{J^c}(\nu)}^*,$$
\item[(b)] for $\lambda\in \cS_n(q)$,
$$\Delta(\kappa_\lambda^*)=\sum_{k=0}^n \kappa_{\lambda_{[k]}}^*\otimes \kappa_{\lambda_{[k]^c}}^*,\qquad \text{where}\qquad \lambda_J=\{i\larc{a}j\in \lambda\mid i,j\in J\}.$$
\end{enumerate}
\end{proposition}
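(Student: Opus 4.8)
The plan is to prove the formula for the product and coproduct of $\SC^*$ by dualizing the corresponding formulas for $\SC$ in the $\kappa_\mu$ basis established in Proposition~\ref{SuperclassFunctionsOperations}, using the pairing for which the $\kappa_\mu^*$ are by definition dual to the $\kappa_\mu$. The key general principle is that the product on a graded dual is transpose to the coproduct of the original algebra, and the coproduct on the dual is transpose to the product; so I would extract the structure constants from Proposition~\ref{SuperclassFunctionsOperations} and reindex.

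For part (b): write $\Delta(\kappa_\lambda^*)=\sum_{\mu,\nu} c^\lambda_{\mu,\nu}\,\kappa_\mu^*\otimes\kappa_\nu^*$; pairing against $\kappa_\mu\otimes\kappa_\nu$ and using that the coproduct of $\SC^*$ is transpose to the product of $\SC$, we get $c^\lambda_{\mu,\nu}=\langle \kappa_\lambda^*,\ \kappa_\mu\cdot\kappa_\nu\rangle$. By Proposition~\ref{SuperclassFunctionsOperations}(a), $\kappa_\mu\cdot\kappa_\nu$ is a sum of $\kappa_\gamma$ over $\gamma=\mu\sqcup\eta\sqcup(k+\nu)$ with the arcs of $\eta$ crossing the cut at $k$; the coefficient of $\kappa_\lambda$ in this sum is $1$ exactly when $\lambda$ has \emph{no} arc crossing the cut at $k$, i.e.\ when $\lambda=\lambda_{[k]}\sqcup(k+\lambda_{[k]^c})$ (so $\mu=\lambda_{[k]}$ and $\nu=\st(\lambda_{[k]^c})$, the shift), and $0$ otherwise. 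This yields $\Delta(\kappa_\lambda^*)=\sum_{k=0}^n \kappa_{\lambda_{[k]}}^*\otimes\kappa_{\lambda_{[k]^c}}^*$ as claimed, once one notes $\lambda_{[k]^c}$ is understood as standardized. For part (a) I would run the transpose argument the other way: $\langle \kappa_\mu^*\cdot\kappa_\nu^*,\ \kappa_\lambda\rangle=\langle \kappa_\mu^*\otimes\kappa_\nu^*,\ \Delta(\kappa_\lambda)\rangle$, and Proposition~\ref{SuperclassFunctionsOperations}(b) expands $\Delta(\kappa_\lambda)=\sum_{A\subseteq[n]}\kappa_{\st_A(\lambda_A)}\otimes\kappa_{\st_{A^c}(\lambda_{A^c})}$ where $\lambda_A=\{i\larc{a}j\in\lambda\mid i,j\in A\}$. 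The pairing picks out those $A$ with $|A|=k$ for which $\st_A(\lambda_A)=\mu$ and $\st_{A^c}(\lambda_{A^c})=\nu$; equivalently $\lambda=\st_A^{-1}(\mu)\cup\st_{A^c}^{-1}(\nu)$. Summing over all valid $A$ gives exactly $\kappa_\mu^*\cdot\kappa_\nu^*=\sum_{J\subseteq[n],|J|=k}\kappa_{\st_J^{-1}(\mu)\cup\st_{J^c}^{-1}(\nu)}^*$.

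The main point requiring care—and the step I expect to be the real obstacle—is bookkeeping with the straightening maps $\st_J$ and the (implicit) standardization conventions, to make sure both sides are genuinely indexed the same way: in part (a) one must check that $\st_J^{-1}(\mu)\cup\st_{J^c}^{-1}(\nu)$ really lies in $\cS_n(q)$ and that distinct pairs $(J,\mu,\nu)$ contributing to a given $\lambda$ are counted with the right multiplicity (they each contribute once), and in part (b) that the cut-set condition from Proposition~\ref{SuperclassFunctionsOperations}(a) indeed forces $\lambda$ to split cleanly at position $k$ with no crossing arc, so that for each $k$ there is exactly one pair $(\mu,\nu)$ with a nonzero structure constant. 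A small secondary check is that the pairing used is the one making $\{\kappa_\mu^*\}$ dual to $\{\kappa_\mu\}$ (equivalently the inner product of (\ref{InnerProductSupercharacters}) rescaled as in the discussion preceding the proposition), and that under it the graded-dual Hopf structure has product transpose to $\Delta_{\SC}$ and coproduct transpose to $m_{\SC}$; granting the generalities of Section~\ref{BackgroundHopfAlgebras}, this is immediate. No deeper representation-theoretic input beyond Proposition~\ref{SuperclassFunctionsOperations} is needed, since the representation-theoretic descriptions of the $\SC^*$ operations (via $\SInd$ and $\Def$) are given only for motivation and the combinatorial formulas can be derived purely by duality.
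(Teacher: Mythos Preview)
Your approach is exactly the paper's: dualize Proposition~\ref{SuperclassFunctionsOperations} via the pairing $\langle\kappa_\mu,\kappa_\nu^*\rangle=\delta_{\mu\nu}$. The paper phrases this through the Frobenius adjointness of $\SInd$/$\Res$ and of $\Inf$/$\Def$ rather than abstract graded duality, because the operations on $\SC^*$ are \emph{defined} representation-theoretically in the text (they are not merely motivational, as you suggest); but that adjointness is precisely what makes them the transposes of the $\SC$ operations, so this is the same computation you carry out.

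There is one slip in your argument for part~(b). You claim the coefficient of $\kappa_\lambda$ in $\kappa_\mu\cdot\kappa_\nu$ is $1$ ``exactly when $\lambda$ has no arc crossing the cut at $k$.'' That condition is spurious. In the decomposition $\lambda=\mu\sqcup\gamma\sqcup(k+\nu)$ of Proposition~\ref{SuperclassFunctionsOperations}(a), any arcs of $\lambda$ that cross the cut are absorbed into $\gamma$; hence for \emph{every} $0\le k\le n$ one has $c^\lambda_{\mu,\nu}=1$ iff $\mu=\lambda_{[k]}$ and $\nu$ is the standardization of $\lambda_{[k]^c}$, with no constraint on crossing arcs. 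Your own later remark that ``for each $k$ there is exactly one pair $(\mu,\nu)$ with a nonzero structure constant'' is the correct statement and is inconsistent with the earlier crossing-arc condition; had you enforced that condition, the sum over $k$ would be restricted and the coproduct formula would be wrong. Once this is corrected, the argument is complete and matches the paper's.
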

\begin{proof}
This result follows from Proposition \ref{SuperclassFunctionsOperations}, and the duality results
\begin{itemize}
\item $\dd\left\langle {}^J\SInd^{\UT_n(q)}_{\st_J(\UT_J(q))}(\psi), \chi\right\rangle=\left\langle \psi, {}^J\Res^{\UT_n(q)}_{\st_J(\UT_J(q))}(\chi)\right\rangle$,
\item $\dd \left\langle \Inf^{\UT_n(q)}_{\UT_{(m_1,\ldots,m_\ell)}(q)}(\psi), \chi\right\rangle=\left\langle \psi, \Def^{\UT_n(q)}_{\UT_{(m_1,\ldots,m_\ell)}(q)}(\chi)\right\rangle$,
\item $\left\langle \kappa_\mu,\kappa_\nu^*\right\rangle=\delta_{\mu\nu}$.\qedhere
\end{itemize}
\end{proof}
Note that the Hopf algebra $\SC^*$ is commutative, but not cocommutative.

\begin{example}  We have
\begin{align*}
\kappa^*\hspace{-.2cm}
\begin{tikzpicture}[baseline=.2cm]
	\foreach \x in {1,2} 
		\node (\x) at (\x/4,0) [inner sep=0pt] {$\scs\bullet$};
	\foreach \x in {1,2} 
		\node at (\x/4,-.2) {$\scscs\x$};
	\draw (1) .. controls (1.25/4,.25) and (1.75/4,.25) ..  node [above=-2pt] {$\scs a$} (2); 
\end{tikzpicture}
\cdot
\kappa^*\hspace{-.2cm}
\begin{tikzpicture}[baseline=.2cm]
	\foreach \x in {1,2,3} 
		\node (\x) at (\x/4,0) [inner sep=0pt] {$\scs\bullet$};
	\foreach \x in {1,2,3} 
		\node at (\x/4,-.2) {$\scscs\x$};
	\draw (1) .. controls (1.75/4,.3) and (2.25/4,.3) ..  node [above=-2pt] {$\scs b$} (3);
\end{tikzpicture}
=&
\kappa^*\hspace{-.2cm}
\begin{tikzpicture}[baseline=.2cm]
	\foreach \x in {1,...,5} 
		\node (\x) at (\x/4,0) [inner sep=0pt] {$\scs\bullet$};
	\foreach \x in {1,...,5} 
		\node at (\x/4,-.2) {$\scscs\x$};
	\draw (1) .. controls (1.25/4,.25) and (1.75/4,.25) ..  node [above=-2pt] {$\scs a$} (2); 
	\draw (3) .. controls (3.75/4,.3) and (4.25/4,.3) ..  node [above=-2pt] {$\scs b$} (5);
\end{tikzpicture}
+\kappa^*\hspace{-.2cm}
\begin{tikzpicture}[baseline=.2cm]
	\foreach \x in {1,...,5} 
		\node (\x) at (\x/4,0) [inner sep=0pt] {$\scs\bullet$};
	\foreach \x in {1,...,5} 
		\node at (\x/4,-.2) {$\scscs\x$};
	\draw (1) .. controls (1.75/4,.3) and (2.25/4,.3) ..  node [above=-2pt] {$\scs a$} (3); 
	\draw (2) .. controls (3/4,.35) and (4/4,.35) ..  node [above=-2pt] {$\scs b$} (5);
\end{tikzpicture}
+\kappa^*\hspace{-.2cm}
\begin{tikzpicture}[baseline=.2cm]
	\foreach \x in {1,...,5} 
		\node (\x) at (\x/4,0) [inner sep=0pt] {$\scs\bullet$};
	\foreach \x in {1,...,5} 
		\node at (\x/4,-.2) {$\scscs\x$};
	\draw (1) .. controls (2/4,.35) and (3/4,.35) ..  node [above=-2pt] {$\scs a$} (4); 
	\draw (2) .. controls (3/4,.35) and (4/4,.35) ..  node [above=-2pt] {$\scs b$} (5);
\end{tikzpicture}
+\kappa^*\hspace{-.2cm}
\begin{tikzpicture}[baseline=.2cm]
	\foreach \x in {1,...,5} 
		\node (\x) at (\x/4,0) [inner sep=0pt] {$\scs\bullet$};
	\foreach \x in {1,...,5} 
		\node at (\x/4,-.2) {$\scscs\x$};
	\draw (1) .. controls (2.25/4,.7) and (3.75/4,.7) ..  node [above=-2pt] {$\scs a$} (5); 
	\draw (2) .. controls (2.75/4,.3) and (3.25/4,.3) ..  node [above=-2pt] {$\scs b$} (4);
\end{tikzpicture}
+\kappa^*\hspace{-.2cm}
\begin{tikzpicture}[baseline=.2cm]
	\foreach \x in {1,...,5} 
		\node (\x) at (\x/4,0) [inner sep=0pt] {$\scs\bullet$};
	\foreach \x in {1,...,5} 
		\node at (\x/4,-.2) {$\scscs\x$};
	\draw (1) .. controls (2.25/4,.7) and (3.75/4,.7) ..  node [above=-2pt] {$\scs b$} (5); 
	\draw (2) .. controls (2.25/4,.25) and (2.75/4,.25) ..  node [above=-2pt] {$\scs a$} (3);
\end{tikzpicture}
+\kappa^*\hspace{-.2cm}
\begin{tikzpicture}[baseline=.2cm]
	\foreach \x in {1,...,5} 
		\node (\x) at (\x/4,0) [inner sep=0pt] {$\scs\bullet$};
	\foreach \x in {1,...,5} 
		\node at (\x/4,-.2) {$\scscs\x$};
	\draw (1) .. controls (2.25/4,.7) and (3.75/4,.7) ..  node [above=-2pt] {$\scs b$} (5); 
	\draw (2) .. controls (2.75/4,.3) and (3.25/4,.3) ..  node [above=-2pt] {$\scs a$} (4);
\end{tikzpicture}\\
&+\kappa^*\hspace{-.2cm}
\begin{tikzpicture}[baseline=.2cm]
	\foreach \x in {1,...,5} 
		\node (\x) at (\x/4,0) [inner sep=0pt] {$\scs\bullet$};
	\foreach \x in {1,...,5} 
		\node at (\x/4,-.2) {$\scscs\x$};
	\draw (1) .. controls (2/4,.35) and (3/4,.35) ..  node [above=-2pt] {$\scs b$} (4); 
	\draw (2) .. controls (3/4,.35) and (4/4,.35) ..  node [above=-2pt] {$\scs a$} (5);
\end{tikzpicture}
+\kappa^*\hspace{-.2cm}
\begin{tikzpicture}[baseline=.2cm]
	\foreach \x in {1,...,5} 
		\node (\x) at (\x/4,0) [inner sep=0pt] {$\scs\bullet$};
	\foreach \x in {1,...,5} 
		\node at (\x/4,-.2) {$\scscs\x$};
	\draw (1) .. controls (2.25/4,.7) and (3.75/4,.7) ..  node [above=-2pt] {$\scs b$} (5); 
	\draw (3) .. controls (3.25/4,.25) and (3.75/4,.25) ..  node [above=-2pt] {$\scs a$} (4);
\end{tikzpicture}
+\kappa^*\hspace{-.2cm}
\begin{tikzpicture}[baseline=.2cm]
	\foreach \x in {1,...,5} 
		\node (\x) at (\x/4,0) [inner sep=0pt] {$\scs\bullet$};
	\foreach \x in {1,...,5} 
		\node at (\x/4,-.2) {$\scscs\x$};
	\draw (1) .. controls (2/4,.35) and (3/4,.35) ..  node [above=-2pt] {$\scs b$} (4); 
	\draw (3) .. controls (3.75/4,.3) and (4.25/4,.3) ..  node [above=-2pt] {$\scs a$} (5);
\end{tikzpicture}
+\kappa^*\hspace{-.2cm}
\begin{tikzpicture}[baseline=.2cm]
	\foreach \x in {1,...,5} 
		\node (\x) at (\x/4,0) [inner sep=0pt] {$\scs\bullet$};
	\foreach \x in {1,...,5} 
		\node at (\x/4,-.2) {$\scscs\x$};
	\draw (1) .. controls (1.75/4,.3) and (2.25/4,.3) ..  node [above=-2pt] {$\scs b$} (3); 
	\draw (4) .. controls (4.25/4,.25) and (4.75/4,.25) ..  node [above=-2pt] {$\scs a$} (5);
\end{tikzpicture}
\end{align*}
and 
\begin{equation*}
\Delta\bigg(
\kappa^*\hspace{-.2cm}
\begin{tikzpicture}[baseline=.2cm]
	\foreach \x in {1,2,3,4} 
		\node (\x) at (\x/4,0) [inner sep=0pt] {$\scs\bullet$};
	\foreach \x in {1,2,3,4} 
		\node at (\x/4,-.2) {$\scscs\x$};
	\draw (1) .. controls (1.25/4,.25) and (1.75/4,.25) ..  node [above=-2pt] {$\scs a$} (2); 
	\draw (2) .. controls (2.75/4,.3) and (3.25/4,.3) ..  node [above=-2pt] {$\scs b$} (4); 
\end{tikzpicture}\bigg)=
\kappa^*\hspace{-.2cm}
\begin{tikzpicture}[baseline=.2cm]
	\foreach \x in {1,2,3,4} 
		\node (\x) at (\x/4,0) [inner sep=0pt] {$\scs\bullet$};
	\foreach \x in {1,2,3,4} 
		\node at (\x/4,-.2) {$\scscs\x$};
	\draw (1) .. controls (1.25/4,.25) and (1.75/4,.25) ..  node [above=-2pt] {$\scs a$} (2); 
	\draw (2) .. controls (2.75/4,.3) and (3.25/4,.3) ..  node [above=-2pt] {$\scs b$} (4); 
\end{tikzpicture}\otimes \kappa_\emptyset^*
+
\kappa^*\hspace{-.2cm}
\begin{tikzpicture}[baseline=.2cm]
	\foreach \x in {1,2,3} 
		\node (\x) at (\x/4,0) [inner sep=0pt] {$\scs\bullet$};
	\foreach \x in {1,2,3} 
		\node at (\x/4,-.2) {$\scscs\x$};
	\draw (1) .. controls (1.25/4,.25) and (1.75/4,.25) ..  node [above=-2pt] {$\scs a$} (2); 
\end{tikzpicture}
\otimes \kappa^*\hspace{-.2cm}
\begin{tikzpicture}[baseline=.1cm]
	\foreach \x in {1} 
		\node (\x) at (\x/4,0) [inner sep=0pt] {$\scs\bullet$};
	\foreach \x in {1} 
		\node at (\x/4,-.2) {$\scscs\x$};
\end{tikzpicture}
+\kappa^*\hspace{-.2cm}
\begin{tikzpicture}[baseline=.2cm]
	\foreach \x in {1,2} 
		\node (\x) at (\x/4,0) [inner sep=0pt] {$\scs\bullet$};
	\foreach \x in {1,2} 
		\node at (\x/4,-.2) {$\scscs\x$};
	\draw (1) .. controls (1.25/4,.25) and (1.75/4,.25) ..  node [above=-2pt] {$\scs a$} (2); 
\end{tikzpicture}
\otimes \kappa^*\hspace{-.2cm}
\begin{tikzpicture}[baseline=.1cm]
	\foreach \x in {1,2} 
		\node (\x) at (\x/4,0) [inner sep=0pt] {$\scs\bullet$};
	\foreach \x in {1,2} 
		\node at (\x/4,-.2) {$\scscs\x$};
\end{tikzpicture}+\kappa^*\hspace{-.2cm}
\begin{tikzpicture}[baseline=.1cm]
	\foreach \x in {1} 
		\node (\x) at (\x/4,0) [inner sep=0pt] {$\scs\bullet$};
	\foreach \x in {1} 
		\node at (\x/4,-.2) {$\scscs\x$};
\end{tikzpicture}\otimes\kappa^*\hspace{-.2cm}\begin{tikzpicture}[baseline=.2cm]
	\foreach \x in {1,2,3} 
		\node (\x) at (\x/4,0) [inner sep=0pt] {$\scs\bullet$};
	\foreach \x in {1,2,3} 
		\node at (\x/4,-.2) {$\scscs\x$};
	\draw (1) .. controls (1.75/4,.3) and (2.25/4,.3) ..  node [above=-2pt] {$\scs b$} (3); 
\end{tikzpicture}+\kappa_\emptyset^* \otimes \kappa^*\hspace{-.2cm}\begin{tikzpicture}[baseline=.2cm]
	\foreach \x in {1,2,3,4} 
		\node (\x) at (\x/4,0) [inner sep=0pt] {$\scs\bullet$};
	\foreach \x in {1,2,3,4} 
		\node at (\x/4,-.2) {$\scscs\x$};
	\draw (1) .. controls (1.25/4,.25) and (1.75/4,.25) ..  node [above=-2pt] {$\scs a$} (2); 
	\draw (2) .. controls (2.75/4,.3) and (3.25/4,.3) ..  node [above=-2pt] {$\scs b$} (4); 
\end{tikzpicture}.
\end{equation*}
\end{example}

\subsection{A realization of $\SC^*$}

A priori, it is not clear that $\NCSym^*$ or $\SC^*$ should have a realization as a space of functions in commuting variables.
Here, we summarize  some results of \cite{HNT} giving such a realization,  and remark that the variables must satisfy relations closely related to the definition of $\cS_n(q)$.

Let $x_{ij}$, for $i,j\ge 1$, be commuting variables satisfying
the relations
\begin{equation}\label{relxij}
x_{ij}x_{ik}=0 \quad \text{and}  \quad
x_{ik}x_{jk}=0 \ \text{for all $i,j,k$.}
\end{equation}
For a permutation $\sigma\in\SG_n$, define
\begin{equation}\label{Msigma}
\Mper_\sigma = \sum_{i_1 < \cdots < i_n}
            x_{i_1\, i_{\sigma(1)}}\cdots x_{i_n\, i_{\sigma(n)}}.
\end{equation}
It is shown in \cite{HNT} that these polynomials span a (commutative, cofree)
Hopf algebra, denoted by $\SG QSym$.

For $\alpha\in \SG_m$ and $\beta\in \SG_n$ we define coefficients $C_{\alpha,\beta}^{\gamma}$ 
\begin{equation}
\label{defC}
\Mper_\alpha \Mper_\beta := \sum_{\gamma} C_{\alpha,\beta}^{\gamma}
\Mper_\gamma.
\end{equation}
which can be computed by the following process:
\begin{description}
\item[Step 1.] Write $\alpha$ and $\beta$ as products of disjoint cycles.
\item[Step 2.] For each subset $A\subseteq [m+n]$ with $m$ elements, renumber  $\alpha$ using the unique order-preserving bijection $\st_A^{-1}:[m]\rightarrow A$, and renumber $\beta$ with the unique order preserving bijection $\st_{A^c}^{-1}:[n] \rightarrow A^c$.
\item[Step 3.] The resulting permutation $\gamma$ gives a term $\Mper_\gamma$ in the product $\Mper_\alpha \Mper_\beta$.
\end{description}
Thus, $C_{\alpha,\beta}^\gamma$ is the number of ways to obtain $\gamma$ from $\alpha$ and $\beta$ using this process.
\begin{example}
If $\alpha=(1)(2)=12$, $\beta=(31)(2)=321$, then Step 2 yields
\begin{equation}
\label{12-321b}
\begin{split}
\overset{A=\{1,2\}}{(1)(2)(53)(4)},\ \overset{A=\{1,3\}}{(1)(3)(52)(4)},\ \overset{A=\{1,4\}}{(1)(4)(52)(3)},\ \overset{A=\{1,5\}}{(1)(5)(42)(3)},\ \overset{A=\{2,3\}}{(2)(3)(51)(4)},\\
\overset{A=\{2,4\}}{(2)(4)(51)(3)},\ \overset{A=\{2,5\}}{(2)(5)(41)(3)},\ \overset{A=\{3,4\}}{(3)(4)(51)(2)},\ \overset{A=\{3,5\}}{(3)(5)(41)(2)},\ \overset{A=\{4,5\}}{(4)(5)(31)(2)},
\end{split}
\end{equation}
and thus $C_{(1)(2),(31)(2)}^{(51)(2)(3)(4)}=3$.
\end{example}

Another interpretation of this product is given by the dual point of
view: $C_{\alpha,\beta}^{\gamma}$ is the number of ways of getting
$(\alpha,\beta)$ as the standardized words of pairs $(a,b)$ of two
complementary subsets of cycles of $\gamma$.
For example, with $\alpha=12$, $\beta=321$, and $\gamma=52341$, one has three
solutions for the pair $(a,b)$, namely
\begin{equation}
((2)(3), (4)(51)),\ \  ((2)(4), (3)(51)),\ \ ((3)(4), (2)(51)).
\end{equation}

\begin{remark}
Each function in $\SG QSym$ can be interpreted as a function on matrices by evaluating $x_{ij}$ at the $(i,j)$-th entry of the matrix (or zero if the matrix does not have an $(i,j)$ entry).  {}From this point of view, $\SG QSym$ intersects the ring of class functions of the wreath product $C_r\wr S_n$ in such a way that it contains the ring of symmetric functions as a natural subalgebra.
\end{remark}

For a permutation $\sigma\in\SG_n$, let $\csupp(\sigma)$ be the partition
$\mu$ of the set $[n]$ whose blocks are the supports of the cycles of
$\sigma$. The sums
\begin{equation}
\upi_{\mu} := \sum_{\csupp(\sigma)=\mu} \Mper_\sigma
\end{equation}
span a Hopf subalgebra $\Pi QSym$ of $\SG QSym$, which is isomorphic to the
graded dual of $\WSym$. 
Indeed, from the product rule of the $\Mper_\sigma$ given in
(\ref{defC}), it follows that
\begin{equation}
\upi_{\mu}\upi_{\nu} = \sum C_{\mu,\nu}^\lambda \upi_\lambda,
\end{equation}
where $C_{\mu,\nu}^\lambda$ is the number of ways of splitting the parts of
$\lambda$ into two subpartitions whose standardized words are $\mu$ and $\nu$.
For example,
\begin{equation}
\upi_{ 124|3}\upi_{1} =
\upi_{ 124|3|5 } +
2\upi_{ 125|3|4} +
\upi_{ 135|4|2} +
\upi_{ 235|4|1}.
\end{equation}

Hence, the basis $\upi_\mu$ has the same product rule as $\bm_\mu^*$.
However, it does not have the same coproduct. To find the correct identification we need the $\bp_\lambda$ basis
introduced in \cite{SR}. Let
  $$\bp_\lambda =\sum_{\mu\ge \lambda} \bm_\mu,$$
where $\mu\ge \lambda$ means that $\lambda$ refines $\mu$. As shown in \cite{BHRZ,SR}
  $$\bp_\lambda \bp_\mu = \bp_{\lambda|\mu}$$
  and following the notation of (\ref{mcomult})
  $$\Delta(\bp_\lambda)=\sum_{J\subseteq [\ell(\lambda)]} \bp_{\mathrm{st}(\lambda_J)}\otimes \bp_{\mathrm{st}(\lambda_{J^c})}.$$
These are precisely the operations we need to give the isomorphism 
$$\begin{array}{rccc}\theta\colon& \Pi QSym&\longrightarrow & \NCSym^*\\  & \upi_\mu & \mapsto & \bp^*_\mu.\end{array}$$  
The isomorphism in Theorem~\ref{q=2Correspondence} maps $\kappa_\mu\mapsto\bm_\mu$, and the dual map is $\bm^*_\mu\mapsto\kappa^*_\mu$.
Hence, if we define
  $$V_\mu=\theta^{-1}(\bm_\mu^*) = \sum_{\nu\le\mu} \upi_\nu,$$
then we obtain the following theorem.

\begin{theorem}
For $q=2$, the function
$$\begin{array}{rccc} \ch: & \SC^* & \longrightarrow & \Pi QSym\\  & \kappa_\mu^* & \mapsto & V_\mu\end{array}$$
is a Hopf algebra isomorphism.
\end{theorem}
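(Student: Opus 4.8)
The plan is to obtain the asserted map as a composite of isomorphisms that are essentially already in place, rather than to check the Hopf-algebra axioms directly for $\kappa_\mu^{*}\mapsto V_\mu$. By Theorem~\ref{q=2Correspondence}, $\ch\colon\SC\to\NCSym$, $\kappa_\mu\mapsto\bm_\mu$, is a Hopf algebra isomorphism, and both $\SC$ and $\NCSym$ are graded connected with finite-dimensional homogeneous components. Hence, as recalled in Section~\ref{BackgroundHopfAlgebras}, the transpose $\ch^{*}\colon\NCSym^{*}\to\SC^{*}$ is again a Hopf algebra isomorphism, and pairing against the monomial basis shows it sends $\bm_\mu^{*}\mapsto\kappa_\mu^{*}$. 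It therefore suffices to prove that $\theta\colon\Pi QSym\to\NCSym^{*}$, $\upi_\mu\mapsto\bp_\mu^{*}$, is a Hopf algebra isomorphism; granting this, the composite $\theta^{-1}\circ(\ch^{*})^{-1}\colon\SC^{*}\to\Pi QSym$ sends $\kappa_\mu^{*}\mapsto\bm_\mu^{*}\mapsto\theta^{-1}(\bm_\mu^{*})=V_\mu$ and, being a composite of Hopf algebra isomorphisms, is one.

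The real content is thus the claim about $\theta$, which I would establish in three steps. First, I would check that the span of the $\upi_\mu$ is indeed a Hopf subalgebra $\Pi QSym$ of $\SG QSym$. Closure under the product is immediate from the rule~(\ref{defC}) for the $\Mper_\sigma$: grouping permutations by the value of $\csupp$ gives $\upi_\mu\upi_\nu=\sum_\lambda C_{\mu,\nu}^{\lambda}\upi_\lambda$, where the dual reading of~(\ref{defC}) (splitting the cycles of $\gamma$ into two complementary subsets) identifies $C_{\mu,\nu}^{\lambda}$ with the number of ways to split the blocks of $\lambda$ into two subpartitions whose standardized words are $\mu$ and $\nu$. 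Closure under the coproduct is the step requiring care: I would unwind the coproduct of $\SG QSym$ inherited from~\cite{HNT}, which comes from splitting the commuting variables $x_{ij}$ into two mutually commuting families, check that it respects $\csupp$, and read off a deconcatenation formula $\Delta(\upi_\mu)=\sum\upi_\rho\otimes\upi_\sigma$, the sum over the ways of writing $\mu=\rho\mid\sigma$ (equivalently, over the initial segments $[k]$ of $[|\mu|]$ that are unions of blocks of $\mu$).

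Second, I would match these two operations with those of the $\bp^{*}$ basis of $\NCSym^{*}$. The product on $\NCSym^{*}$ is dual to $\Delta(\bp_\lambda)=\sum_{J}\bp_{\st(\lambda_J)}\otimes\bp_{\st(\lambda_{J^c})}$, so its structure constants in the $\bp^{*}$ basis are exactly the $C_{\mu,\nu}^{\lambda}$ of the first step; the coproduct on $\NCSym^{*}$ is dual to $\bp_\lambda\bp_\mu=\bp_{\lambda\mid\mu}$, hence $\Delta(\bp_\nu^{*})=\sum_{\nu=\rho\mid\sigma}\bp_\rho^{*}\otimes\bp_\sigma^{*}$, which agrees termwise with the coproduct of $\upi_\nu$ computed above. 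This shows $\theta$ intertwines product, coproduct, unit and counit; it carries a basis to a basis, so it is bijective, and an invertible bialgebra morphism between graded connected Hopf algebras is automatically a Hopf algebra isomorphism. Third, I would record $V_\mu=\sum_{\nu\le\mu}\upi_\nu$: since $\bp_\lambda=\sum_{\mu\ge\lambda}\bm_\mu$ by the definition in~\cite{SR}, the change of basis between the $\bm$'s and the $\bp$'s is unitriangular for the refinement order, and transposing it gives $\bm_\mu^{*}=\sum_{\nu\le\mu}\bp_\nu^{*}$, so $V_\mu=\theta^{-1}(\bm_\mu^{*})=\sum_{\nu\le\mu}\upi_\nu$, matching the formula above. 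Combining the three steps with the first paragraph finishes the argument.

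I expect the main obstacle to be the coproduct bookkeeping in the first step: passing from the definition of the $\SG QSym$ coproduct in terms of a doubled set of commuting variables to a clean ``deconcatenate by support'' description, and verifying that this description matches, term for term, the dual of $\bp_\lambda\bp_\mu=\bp_{\lambda\mid\mu}$; everything else is either quoted from~\cite{HNT,SR} or a routine basis-level computation. As an alternative to routing through $\theta$ and $\SG QSym$, one could instead verify directly, using the explicit $\kappa^{*}$-product and $\kappa^{*}$-coproduct from the Proposition of Section~\ref{DualSCSection} together with $V_\mu=\sum_{\nu\le\mu}\upi_\nu$ and the $\upi$-relations, that $\kappa_\mu^{*}\mapsto V_\mu$ preserves all the structure maps; but the composite-of-isomorphisms route is shorter and reuses Theorem~\ref{q=2Correspondence} directly.
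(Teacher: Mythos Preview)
Your approach is exactly the one the paper takes: the theorem is presented as an immediate consequence of the preceding discussion, namely dualizing Theorem~\ref{q=2Correspondence} to get $\bm_\mu^*\mapsto\kappa_\mu^*$, invoking the isomorphism $\theta\colon\Pi QSym\to\NCSym^*$, $\upi_\mu\mapsto\bp_\mu^*$, and composing. The paper does not spell out the verification that $\theta$ is a Hopf isomorphism (it defers to \cite{HNT} and \cite{BHRZ,SR}), whereas you outline how to fill those details in; your identification of the coproduct check on $\Pi QSym$ as the delicate step is accurate, and your derivation of $V_\mu=\sum_{\nu\le\mu}\upi_\nu$ by transposing $\bp_\lambda=\sum_{\mu\ge\lambda}\bm_\mu$ matches the paper's definition verbatim.
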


\begin{remark}
For general $q$ one needs a colored version of $ \Pi QSym$. This can be done in the same spirit of Section~\ref{ColoredCorrespondence} and we leave it to the reader.
\end{remark}

\section{Appendix}

Supercharacter theory has continued its development since we began our work. In addition to the references above, we call attention to a selection of recent papers.  In particular, the results of this paper have been generalized to unipotent subgroups of type $D$ \cite{Be11}, there has been some exploration into further decomposing these supercharacters \cite{Le10,Ma11} and further analysis of their underlying combinatorics \cite{MaA11,MaP11}.

In addition to the above results, the American Institute of Mathematics workshop generated several items that might be of interest to those who would like to pursue these thoughts further. 

\subsection{Sage}

A Sage package has been written, and is described at

\medskip

\noindent \textsf{http://garsia.math.yorku.ca/\textasciitilde saliola/supercharacters/}

\medskip

\noindent It has a variety of functions, including the following.
\begin{itemize}
\item It can use various bases, including the supercharacter basis and the superclass functions basis, 
\item It can change bases,
\item It computes products, coproducts and antipodes in this Hopf algebra,
\item It computes the inner tensor products (pointwise product) and restriction in the ring of supercharacters,
\item It gives the supercharacter tables for $\UT_n(q)$.
\end{itemize}

\subsection{Open problems}

There is a list of open problems related to this subject available at

\medskip

\noindent \textsf{http://www.aimath.org/pastworkshops/supercharacters.html}

\medskip


\footnotesize

 M. Aguiar, University of Texas A\&M,  \textsf{maguiar@math.tamu.edu}
 
  C. Andr\'e, University of Lisbon, \textsf{caandre@fc.ul.pt}
  
 C. Benedetti, York University, \textsf{carobene@mathstat.yorku.ca}
 
N. Bergeron, York University, supported by CRC and NSERC, \textsf{bergeron@yorku.ca}

 Z. Chen, York University, \textsf{czhi@mathstat.yorku.ca}
 
 P. Diaconis, Stanford University, supported by NSF DMS-0804324, \textsf{diaconis@math.stanford.edu}
 
A. Hendrickson, Concordia College, \textsf{ahendric@cord.edu}

 S. Hsiao, Bard College, \textsf{hsiao@bard.edu}
 
I.M. Isaacs, University of Wisconsin-Madison, \textsf{isaacs@math.wisc.edu}

A. Jedwab, University of Southern California, supported by NSF DMS 07-01291, \textsf{jedwab@usc.edu}

 K. Johnson, Penn State Abington, \textsf{kwj1@psuvm.psu.edu }
 
 G. Karaali, Pomona College, \textsf{gizem.karaali@pomona.edu}
 
A. Lauve, Loyola University, \textsf{lauve@math.luc.edu}

T. Le, University of Aberdeen, \textsf{t.le@abdn.ac.uk} 

S. Lewis, University of Washington, supported by NSF DMS-0854893, \textsf{stedalew@u.washington.edu}

H. Li,  Drexel University, supported by NSF DMS-0652641, \textsf{huilan.li@gmail.com}

K. Magaard, University of Birmingham, \textsf{k.magaard@bham.ac.uk}

 E. Marberg, MIT, supported by NDSEG Fellowship, \textsf{emarberg@math.mit.edu}
 
 J.-C. Novelli, Universit\'e Paris-Est Marne-la-Vall\'ee, \textsf{novelli@univ-mlv.fr}
 
 A. Pang, Stanford University, \textsf{amypang@stanford.edu}
 
 F. Saliola, Universit\'e du Qu\'ebec \'a Montr\'eal, supported by CRC, \textsf{saliola@gmail.com}
 
 L. Tevlin, New York University, \textsf{ltevlin@nyu.edu}
 
 J.-Y. Thibon, Universit\'e Paris-Est Marne-la-Vall\'ee, \textsf{jyt@univ-mlv.fr}
 
 N. Thiem, University of Colorado at Boulder, supported by NSF DMS-0854893, \textsf{thiemn@colorado.edu}
 
 V. Venkateswaran, Caltech University, \textsf{vidyav@caltech.edu}
 
 C.R. Vinroot, College of William and Mary, supported by NSF DMS-0854849, \textsf{vinroot@math.wm.edu}
 
 N. Yan, \textsf{ning.now@gmail.com}
 
 M. Zabrocki, York University, \textsf{zabrocki@mathstat.yorku.ca}

\normalsize
\end{document}